\tikzstyle{vertex}=[circle, draw, inner sep=0pt, minimum size=4pt]
\newcommand{\vertex}{\node[vertex]}
\theoremstyle{plain}
\newtheorem{theorem}{Theorem}[section]
\newtheorem{prop}[theorem]{Proposition}
\newtheorem{conjecture}[theorem]{Conjecture}
\newtheorem{lemma}[theorem]{Lemma}
\newtheorem{corollary}[theorem]{Corollary}
\theoremstyle{definition}
\newtheorem{remark}[theorem]{Remark}
\newtheorem{definition}[theorem]{Definition}
\newtheorem{example}[theorem]{Example}
\newcommand{\conv}[1]{\mathrm{conv}\{#1\}}
\newcommand{\R}{\mathbb{R}}
\newcommand{\N}{\mathbb{N}}
\newcommand{\Z}{\mathbb{Z}}
\newcommand{\Ds}{\displaystyle}
\newcommand{\maxf}{\mathrm{maxf}}
\newcommand{\minf}{\mathrm{minf}}
\newcommand{\maxfac}{\mathrm{Maxf}}
\newcommand{\minfac}{\mathrm{Minf}}
\newcommand{\PG}{P_G}
\newcommand{\PH}{P_H}
\newcommand{\facets}{N}
\newcommand{\CB}{CB}
\newcommand{\windmill}{WM}
\begin{document}

\title{Facets of Symmetric Edge Polytopes for Graphs with Few Edges}

\author{Benjamin Braun}
\address{Department of Mathematics\\
         University of Kentucky\\
         Lexington, KY 40506--0027}
\email{benjamin.braun@uky.edu}

\author{Kaitlin Bruegge}
\address{Department of Mathematics\\
         University of Kentucky\\
         Lexington, KY 40506--0027}
\email{kaitlin\_bruegge@uky.edu}

\date{5 July 2023}

\thanks{
The authors were partially supported by National Science Foundation award DMS-1953785.
The authors thank Rob Davis and Tianran Chen for helpful discussions.
The authors thank the anonymous referees for many helpful suggestions and references.}

\begin{abstract}
Symmetric edge polytopes, also called adjacency polytopes, are lattice polytopes determined by simple undirected graphs.
We introduce the integer array giving the maximum number of facets of a symmetric edge polytope for a connected graph having \(n\) vertices and \(m\) edges, and the corresponding sequence of minimal values.
We establish formulas for the number of facets obtained in several classes of sparse graphs and provide partial progress toward conjectures that identify facet-maximizing graphs in these classes.
These formulas are combinatorial in nature and lead to independently interesting observations and conjectures regarding integer sequences defined by sums of products of binomial coefficients.
\end{abstract}

\maketitle

\section{Introduction}\label{sec:intro}

Given a finite graph \(G\), there are many ways to construct a lattice polytope using \(G\) as input: graphical zonotopes, edge polytopes, matching polytopes, stable set polytopes, Laplacian simplices, flow polytopes, and others.
Of recent interest is the \emph{symmetric edge polytope} \(P_G\), introduced by Matsui, Higashitani, Nagazawa, Ohsugi, and Hibi~\cite{matsuietal2011}.
These are known as \emph{adjacency polytopes} in some applied settings~\cite{chen2017counting}.
Symmetric edge polytopes are of interest in several areas, including the study of Ehrhart theory and applications to algebraic Kuramoto equations, and these polytopes have been the subject of intense recent study~\cite{chen2021facets,chen2017counting,chen2020graphadjacency,dalidelucchimichalek,dali2022gammavector,higashitanijochemkomateusz,kalman2022ehrhart,matsuietal2011,smoothfanoehrhart2012,symmetricedgematchingpolys}.

In this paper, we study the number of facets of \(P_G\) for connected graphs, with an emphasis on those graphs having few edges.
Our study is motivated by the following question: for a fixed number of vertices and edges, what properties of connected graphs lead to symmetric edge polytopes with either a large or small number of facets?
This leads us to the following definition.

\begin{definition}\label{def:MF}
For \(n\geq 2\) and \(m\geq n-1\), define \(\maxf(n,m)\) to be the maximum number of facets of a symmetric edge polytope for a connected graph having \(n\) vertices and \(m\) edges, and similarly define \(\minf(n,m)\) to be the minimum number of facets.
For \(n\geq 2\), we define \(\maxfac(n)\) to be the maximum number of facets of a symmetric edge polytope for a connected graph having \(n\) vertices, and similarly define \(\minfac(n)\) to be the minimum number.
\end{definition}

The first few values of \(\maxf(n,m)\), sequence A360408 in OEIS~\cite{oeis}, are given in Table~\ref{tab:maxf}.
The first few values of \(\minf(n,m)\), sequence A360409 in OEIS~\cite{oeis}, are given in Table~\ref{tab:minf}.
The sequence $\maxfac(n)$ is given by 
\[
2, 6, 14, 36, 84, 216, 504, 1296,\ldots \, 
\]
while the sequence $\minfac(n)$ is given by
\[
2, 4, 6, 10, 14, 22, 30, 46,\ldots
\]

The problem of determining $\maxf(n,m)$ and $\minf(n,m)$ is challenging, in part due to the complicated combinatorial structures that describe the facets of \(P_G\).
Our experimental data suggest that facet-maximizing graphs can be obtained as wedges of odd cycles; how broadly this holds for general $n$ and $m$ beyond relatively sparse graphs is not clear.
Based on computational evidence obtained with SageMath~\cite{sage}, we offer the following conjecture regarding terms of the sequences $\maxfac(n)$ and $\minfac(n)$ in general (all undefined terms below are defined in subsequent sections).
Note that the conjectured sequence for $\minfac(n)$ is entry A027383 in OEIS~\cite{oeisA027383}.

\begin{table}[t]
\begin{tabular}{c|cccccccccccccccc}
$n$, $m-n+1$ & 0  & 1 & 2 & 3 & 4 & 5 & 6 & 7 & 8 & 9 & 10 & 11 & 12 & 13 \\
\hline 
2 & 2  \\
3 &  4 & 6 \\
4 &  8 & 12 & 12 & 14 \\
5 &  16 & 30 & 36 & 28 & 28 & 28 & 30 \\
6 &  32 & 60 & 72 & 72 & 84 & 68 & 68 & 60 & 60 & 60 & 62 \\
7 &  64 & 140 & 180 & 216 & 168 & 168 & 196 & 180 & 148 & 148 & 132 & 132 & 124 & 124 \\
\end{tabular}
\caption{$\maxf(n,m)$.}
\label{tab:maxf}
\end{table}

\begin{table}[t]
\begin{tabular}{c|cccccccccccccccc}
$n$, $m-n+1$ & 0  & 1 & 2 & 3 & 4 & 5 & 6 & 7 & 8 & 9 & 10 & 11 & 12 & 13 & 14 & 15\\
\hline 
2 & 2  \\
3 &  4 & 6 \\
4 &  8 & 6 & 12 & 14 \\
5 &  16 & 12 & 10 & 22 & 26 & 28 & 30 \\
6 &  32 & 20 & 18 & 16 & 14 & 42 & 54 & 56 & 58 & 60 & 62 \\
7 &  64 & 40 & 32 & 28 & 26 & 24 & 22 & 78 & 102 & 106 & 116 & 118 & 120 & 122 & 124 & 126\\
\end{tabular}
\caption{$\minf(n,m)$.}
\label{tab:minf}
\end{table}

\begin{conjecture}\label{conj:maxmin}
    Let $n\geq 3$.
    \begin{enumerate}
        \item For $n=2k+1$, $\maxfac(n)=6^k$, which is attained by a wedge of $k$ cycles of length three.
        \item For $n=2k$, $\maxfac(n)=14\cdot 6^{k-2}$, which is attained by a wedge of $K_4$ with $k-2$ cycles of length three.
        \item For $n=2k+1$, $\minfac(n)=3\cdot2^k-2$, which is attained by $K_{k,k+1}$.
        \item For $n=2k$, $\minfac(n)=2^{k+1}-2$, which is attained by $K_{k,k}$.
    \end{enumerate}
\end{conjecture}

The fact that the conjectured max and min values in parts~(1) and~(2) of Conjecture~\ref{conj:maxmin} are attained by a wedge follows from Proposition~\ref{prop:wedgesmultiply} below, while the analogous values for bipartite graphs in parts~(3) and~(4) were established by Higashitani, Jochemko, and Micha\l ek~\cite{higashitanijochemkomateusz}.

It is known that the symmetric edge polytope for any tree on $n$ vertices is combinatorially a cross polytope and thus has $2^{n-1}$ facets, hence $\maxf(n,n-1)=2^{n-1}$.
More generally, the number of facets for symmetric edge polytopes can be derived using combinatorial tools.
Specifically, a combinatorial description of the facet-defining hyperplanes of \(P_G\) was given by  Higashitani, Jochemko, and Micha\l ek~\cite{higashitanijochemkomateusz}.
Further, Chen, Davis, and Korchevskaia~\cite{chen2021facets} give a combinatorial description of the faces of \(P_G\) that utilizes special subgraphs of \(G\).

It follows from Definition~\ref{def:pg} below that symmetric edge polytopes are centrally symmetric lattice polytopes.
Symmetric edges polytopes have also been shown to be reflexive and terminal~\cite{higashitanifanopolytopes}.
Further, Higashitani~\cite[Theorem 3.3]{higashitanifanopolytopes} proved that centrally symmetric simplicial reflexive polytopes are precisely the symmetric edge polytopes of graphs without even cycles.
In Conjecture~\ref{conj:maxmin}(1), the symmetric edge polytopes arising from wedges of cycles of length three fall within this family.
This is related to a result due to Nill~\cite[Corollary 4.4]{NillClassification} stating that the maximum number of facets for any pseudo-symmetric reflexive simplicial \(d\)-polytope \(P\) is \(6^{d/2}\) and that the maximum is attained if and only if \(P\) is a free sum of \(d/2\) copies of \(P_{K_3}\).
Thus, Conjecture~\ref{conj:maxmin}(1) aligns with existing results regarding these polytopes.

In this work, we investigate the sequences $\maxf(n,n)$ and $\maxf(n,n+1)$.
We provide an exact result for $\maxf(n,n)$ and provide partial progress toward a conjectured value of $\maxf(n,n+1)$.
The use of combinatorial tools for this analysis  produces independently interesting integer sequences defined by sums of products of binomial coefficients.

This paper is structured as follows.
In Section~\ref{sec:background}, we provide necessary definitions and background.  
In Section~\ref{sec:disjoint_cycle}, we give formulas for the number of facets and discuss facet-maximizers among some sparse connected graphs, namely graphs on \(n\) vertices with \(n\) or \(n+1\) edges where any cycles present are edge-disjoint.
In these cases, Theorems~\ref{thm:nnmax}~and~\ref{thm: disjoint cycles less than M(n)} respectively describe facet-maximizing graphs. 
In Section~\ref{sec:overlap_cycle}, we discuss facet counts for graphs constructed from internally disjoint paths connected at their endpoints and give formulas in Propositions~\ref{prop: same parity}~and~\ref{prop:facets of gen cb mixed}.
As a special case of this, we get results about the number of facets arising from graphs with \(n\) vertices and \(n+1\) edges where the cycles share at least one edge, and we make progress toward generalizing Theorem~\ref{thm: disjoint cycles less than M(n)} to this class of graphs. 
In Section~\ref{sec:conjectures}, we give several conjectures regarding facet-maximizing graphs in certain families.  
We also discuss computational evidence supporting these conjectures.

\section{Background}\label{sec:background}

\begin{definition}\label{def:pg}
Let \(G\) be a graph on the vertex set \([n]=\{1,\ldots,n\}\) and edge set \(E=E(G)\).
Let \(e_i\) denote the \(i\)-th standard basis vector in \(\R^n\) and let \(\conv{X}\) denote the convex hull of a subset \(X\subset \R^n\).
The \emph{symmetric edge polytope} for $G$ is
\[
\PG:=\conv{\pm(e_i-e_j): \{i,j\}\in E(G)}\,.
\]

We denote by \(N(P)\) the number of facets of a polytope \(P\).
We denote by both \(N(\PG)\) and \(N(G)\) the number of facets of \(\PG\).
\end{definition}

\begin{example}
Let \(G\) be the path with vertices \(\{1,2,3\}\) and edges \(\{12,23\}\).
Then 
\[
\PG=\conv{\pm(e_1-e_2),\pm(e_2-e_3)}\subset \R^3
\]
is a \(4\)-gon contained in the orthogonal complement of the vector \(\langle 1,1,1\rangle\). 
This polygon has four 1-dimensional faces.
So \(\facets(\PG)=4\).
\end{example}

In general, the machinery used to count the facets of $\PG$ are functions \(f:V\rightarrow \Z\) on the set \(V\) of vertices in \(G\) satisfying certain properties. 
It was shown in~\cite[Theorem 3.1]{higashitanijochemkomateusz} that the facets of \(\PG\) are in bijection with these functions. 

\begin{theorem}[Higashitani, Jochemko, Micha\l ek \cite{higashitanijochemkomateusz}]\label{thm:facetdescription}
Let \(G=(V,E)\) be a finite simple connected graph.
Then \(f:V\rightarrow\Z\) is facet-defining if and only if both of the following hold.
\begin{enumerate}
    \item[(i)]For any edge \(e=uv\) we have \(|f(u)-f(v)|\leq 1\).
    \item[(ii)] The subset of edges \(E_f=\{e=uv\in E\::\: |f(u)-f(v)|= 1\}\) forms a spanning connected subgraph of \(G\).
\end{enumerate}
\end{theorem}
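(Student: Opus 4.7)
The plan is to interpret each facet of $P_G$ as the maximum face of a linear functional on $\R^n$ parametrized by a function $f\colon V\to\R$, and then to translate the requirement that this max face have dimension $\dim P_G - 1$ into the two combinatorial conditions. Since every vertex $\pm(e_i-e_j)$ of $P_G$ satisfies $\sum x_i = 0$, and since the vectors $e_i-e_j$ for $\{i,j\}\in E$ span the hyperplane $\{\sum x_i = 0\}$ when $G$ is connected, I first establish that $\dim P_G = n-1$, so a facet has dimension $n-2$.

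Next, identifying a functional with $f\colon V\to\R$ (defined up to an additive constant, which is the only ambiguity because $P_G$ lies in and affinely spans the hyperplane $\{\sum x_i = 0\}$), I observe that its value at the vertex $\pm(e_i-e_j)$ is $\pm(f(i)-f(j))$, so its maximum on $P_G$ equals $M(f):=\max_{\{i,j\}\in E}|f(i)-f(j)|$. I would then invoke reflexivity of $P_G$ (cited in the paper) to rescale any facet-defining functional so that it is integer-valued with $M(f)=1$; after this rescaling, requiring the inequality $\langle f,\cdot\rangle \leq 1$ to be satisfied at every vertex of $P_G$ is exactly condition (i), and the vertices of the maximum face $F_f$ are precisely $\{e_i-e_j : f(i)-f(j)=1\}$, contributing one vertex per edge of $E_f$.

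The heart of the argument is then a dimension computation. For any subgraph $H$ of $G$ on vertex set $V=[n]$, the vectors $\{e_i-e_j : \{i,j\}\in E(H)\}$ linearly span a subspace of dimension $n-c(H)$, where $c(H)$ counts the connected components of $H$ (including isolated vertices). Applied to $H=(V,E_f)$, the vertex set of $F_f$ has linear span of dimension $n-c(E_f)$. Since $\langle f,v\rangle = 1 \neq 0 = \langle f,0\rangle$ for each vertex $v$ of $F_f$, the origin does not lie in the affine span of these vertices, so the affine span has dimension exactly one less than the linear span, namely $n-c(E_f)-1$. Setting this equal to $n-2$ forces $c(E_f)=1$, meaning $(V,E_f)$ is a spanning connected subgraph of $G$ — condition (ii). The converse runs the identical dimension count in reverse: if (i) and (ii) hold, then $F_f$ is a face of dimension $n-2$, hence a facet.

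The main technical hurdle, in my view, is the normalization step in the second paragraph: justifying that each facet admits an integer representative $f$ with $M(f)=1$, and describing how distinct representatives of the same facet differ (by adding a constant to $f$). Once reflexivity of $P_G$ is invoked, this scaffolding is routine, and the bijection implicit in the theorem follows cleanly from the linear-algebraic dimension count above. A minor supplementary check is that the two conditions together are non-degenerate: condition (ii) forces $E_f$ to be nonempty, so $M(f) = 1$ is actually attained and the face $F_f$ really is cut out by a supporting hyperplane.
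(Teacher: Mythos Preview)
The paper does not prove this theorem; it is quoted verbatim from~\cite{higashitanijochemkomateusz} (their Theorem~3.1) and used as a black box, so there is no proof here to compare your proposal against.

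That said, your outline is the standard argument and is correct. The dimension count is the right engine: the linear span of $\{e_i-e_j:\{i,j\}\in E_f\}$ has dimension $n-c(E_f)$, the affine span of the face vertices drops by one because the supporting hyperplane $\langle f,\cdot\rangle=1$ misses the origin, and equating $n-c(E_f)-1$ with $n-2$ forces $c(E_f)=1$. One small remark on the normalization step: since $P_G$ sits in the hyperplane $\{\sum x_i=0\}$ and is not full-dimensional in $\R^n$, reflexivity is to be read relative to the sublattice $\{x\in\Z^n:\sum x_i=0\}$ (equivalently, after projecting to $\R^{n-1}$). You already account for this by treating $f$ as defined only up to an additive constant, which is exactly the ambiguity that projection introduces; the paper makes the same identification in the sentence immediately following the theorem statement.
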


As symmetric edge polytopes are contained in the hyperplane orthogonal to the span of the vector where every entry is one, two facet-defining functions are identified if they differ by a common constant.
The spanning connected subgraphs with edge sets \(E_f\) arising in Theorem~\ref{thm:facetdescription}, called \emph{facet subgraphs}, have further structure. 

\begin{lemma}[Chen, Davis, Korchevskaia \cite{chen2021facets}]\label{lem:facetsubgraphs}
Let \(G\) be a connected graph. 
A subgraph \(H\) of \(G\) is a facet subgraph of \(G\) if and only if it is a maximal connected spanning bipartite subgraph of \(G\).
\end{lemma}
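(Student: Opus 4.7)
The plan is to prove both directions of the equivalence by translating between facet-defining functions $f$ and bipartitions of $H$, using the parity of $f$ as the bridge.

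For the forward direction, suppose $H = E_f$ for some facet-defining $f$. By condition (ii) of Theorem~\ref{thm:facetdescription}, $H$ is spanning and connected, so only bipartiteness and maximality need verification. I would define the bipartition of $V$ by $V_0 = \{v : f(v) \text{ even}\}$ and $V_1 = \{v : f(v) \text{ odd}\}$. Every edge of $E_f$ satisfies $|f(u) - f(v)| = 1$, so its endpoints have opposite $f$-parities and lie in different parts, proving $H$ is bipartite. For maximality, consider any $e = uv \in E(G) \setminus E(H)$. By condition (i) and the fact that $e \notin E_f$, we must have $f(u) = f(v)$, so $u$ and $v$ belong to the same part. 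Since $H$ is connected, there is a path in $H$ from $u$ to $v$; along this path each edge flips $f$-parity, so the path has even length, and adjoining $e$ produces an odd cycle. Thus $H + e$ is not bipartite, giving maximality.

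For the reverse direction, let $H$ be a maximal connected spanning bipartite subgraph of $G$ with (necessarily unique up to swap) bipartition $(V_0, V_1)$. I would define $f : V \to \Z$ by $f(v) = 0$ for $v \in V_0$ and $f(v) = 1$ for $v \in V_1$; this is well-defined on all of $V$ because $H$ spans $V$. Condition (i) is immediate since $f$ only takes values $0$ and $1$. To verify that $E_f = E(H)$, note that edges of $H$ run between $V_0$ and $V_1$, so $E(H) \subseteq E_f$. Conversely, if some $e = uv \in E_f \setminus E(H)$ had one endpoint in each part, then $H + e$ would remain bipartite with bipartition $(V_0, V_1)$ while staying connected and spanning, contradicting the maximality of $H$. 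Therefore $E_f = E(H)$, and condition (ii) follows from $H$ being spanning and connected. Hence $H$ is a facet subgraph by Theorem~\ref{thm:facetdescription}.

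The main obstacle is the maximality argument in the forward direction, where one must confirm that the cycle created by adding a ``non-facet'' edge is actually odd; this is where the parity interpretation of $f$ is essential. Once that observation is in place, both directions reduce to keeping track of the correspondence between $\{0,1\}$-valued labelings and bipartitions, and no further technical machinery is needed.
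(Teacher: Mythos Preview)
Your argument is correct. Note, however, that the paper does not supply its own proof of this lemma: it is quoted as a result of Chen, Davis, and Korchevskaia~\cite{chen2021facets} and used as a black box, so there is no in-paper proof to compare against. Your parity-based translation between facet-defining functions and bipartitions is the natural way to recover the result directly from Theorem~\ref{thm:facetdescription}, and both directions go through as written; in particular, your handling of maximality in the forward direction (showing that a non-$E_f$ edge joins two vertices of equal $f$-parity and hence closes an odd cycle) is exactly the point that needs care, and you have it right.
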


Lemma~\ref{lem:facetsubgraphs} provides a strategy for identifying the facets of \(P_G\) combinatorially: first identify the maximal connected spanning bipartite subgraphs of \(G\), then determine the valid integer labelings of the vertices.
Facet counts for symmetric edge polytopes are known for certain classes of graphs. 
A class of particular interest to us is cycles.
Let \(C_n\) denote the cycle with \(n\) edges and let \(Q_n\) denote the path with \(n\) edges.

\begin{lemma}\label{lem:cyclefacets}
For any \(m\),
\[
\facets(P_{C_m})=
\left\{
\begin{array}{cl}
\binom{m}{m/2} & m\text{ even}\\
m\binom{m-1}{(m-1)/2} &  m\text{ odd}
\end{array}
\right.
\]
\end{lemma}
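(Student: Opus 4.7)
The plan is to apply Theorem~\ref{thm:facetdescription} and count integer-valued functions $f \colon V(C_m) \to \Z$ (modulo additive constants) that satisfy conditions~(i) and~(ii); each equivalence class is in bijection with a facet of $P_{C_m}$.

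First, I would observe that on the cycle $C_m$ the spanning connectedness of $E_f$ in~(ii) is very restrictive: deleting any $k \ge 2$ edges from $C_m$ produces $k$ disjoint arcs, so any spanning connected subgraph is obtained by deleting at most one edge. Hence $E_f$ is either the full cycle or a Hamiltonian path $C_m\setminus\{e\}$ for some edge $e$. Since $E_f = \{uv : |f(u)-f(v)|=1\}$ is recoverable directly from $f$, the data of a facet is equivalent to the data of $f$ up to constants together with the choice (if any) of deleted edge.

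Next, I would normalize $f(v_1)=0$ at a fixed base vertex and translate each $f$ into a lattice walk on $\Z$ of length $m$. Going once around the cycle, the successive differences $f(v_{i+1})-f(v_i)$ lie in $\{-1,0,+1\}$ by~(i), with the $0$-differences occurring exactly on the edges not in $E_f$. In Case~A ($E_f = C_m$) all $m$ steps are $\pm 1$, and in Case~B ($E_f = C_m\setminus\{e\}$) exactly one step is $0$, placed at the position of $e$.

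Finally I would split by the parity of $m$. In Case~A the walk closes up iff $m$ is even, and the number of closed $\pm 1$ walks of length $m$ on $\Z$ is $\binom{m}{m/2}$ (choose which $m/2$ steps are $+1$). In Case~B, closing up requires $m-1$ to be even, hence $m$ odd; there are $m$ positions for the $0$-step and, for each, $\binom{m-1}{(m-1)/2}$ closed $\pm 1$ walks of length $m-1$, for a total of $m\binom{m-1}{(m-1)/2}$. Summing the non-vanishing case in each parity yields the claimed formula. The main subtlety is confirming that the bijection between facets and pairs (deleted edge or not, lattice walk) is clean---in particular that distinct choices of deleted edge in Case~B yield distinct functions modulo constants---which follows from the recoverability of $E_f$ from $f$.
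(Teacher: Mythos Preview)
Your argument is correct. The parity split is handled cleanly: for even $m$ only Case~A survives because a closed $\pm 1$ walk of odd length $m-1$ is impossible, and for odd $m$ only Case~B survives because a closed $\pm 1$ walk of odd length $m$ is impossible. Your remark that $E_f$ is recoverable from $f$ correctly handles the potential overcounting in Case~B.

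The paper itself does not give a direct argument here; it simply cites \cite[Proposition~12]{chen2017counting} for the even case and \cite[Remark~4.3]{nill2006} for the odd case. Your proof is therefore a genuinely different route: it is self-contained and relies only on Theorem~\ref{thm:facetdescription}, which the paper has already stated. In fact, your lattice-walk/edge-labeling method is exactly the technique the paper deploys later in Propositions~\ref{prop: same parity} and~\ref{prop:facets of gen cb mixed} to count facets of the more general graphs $\CB(\mathbf{m})$; the cycle $C_m$ is just the case $t=2$ of that construction. So your approach buys internal consistency with the rest of the paper at the cost of a short extra paragraph, while the paper's citation approach keeps the exposition of background material brief.
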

\begin{proof}
For even \(m\), the facets of \(P_{C_m}\) are identified and counted in~\cite[Proposition~12]{chen2017counting}, and for odd \(m\) in~\cite[Remark~4.3]{nill2006}.
\end{proof}

Though the two-cycle, \(C_2\), is a multigraph (and thus, its symmetric edge polytope is not defined), its facet-defining functions would be exactly the facet-defining functions of a graph on two vertices with a single edge.  
This is consistent with the formula in Lemma~\ref{lem:cyclefacets}.

For a graph \(G\) that is constructed by identifying two graphs at a single vertex, there is a relationship between the facets of \(\PG\) and the facets of the subgraphs.  

\begin{definition}
For graphs \(G\) and \(H\), let \(G\vee H\) denote a graph obtained by identifying a vertex in \(G\) with a vertex in \(H\).
We call $G\vee H$ a \emph{wedge} or \emph{join}.
\end{definition}

Note that we do not specify a choice of identification points when defining \(G\vee H\), as by the following proposition any such choice yields a symmetric edge polytope with the same number of facets.

\begin{prop}\label{prop:wedgesmultiply}
For connected graphs \(G\) and \(H\),
\[
\facets(P_{G\vee H})=\facets(\PG)\cdot\facets(P_H).
\]
\end{prop}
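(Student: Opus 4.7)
The plan is to apply Theorem~\ref{thm:facetdescription} and construct a bijection between facet-defining functions on $G\vee H$ and pairs of facet-defining functions on $G$ and on $H$, all taken modulo additive constants. Let $v$ denote the vertex at which $G$ and $H$ are identified in forming $G\vee H$. As a preliminary normalization, I will select in each equivalence class of facet-defining functions on $G$, $H$, and $G\vee H$ the unique representative vanishing at $v$, so that counting equivalence classes reduces to counting such normalized functions.

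In the forward direction, given a facet-defining $f$ on $G\vee H$ with $f(v)=0$, I define $f_G := f|_{V(G)}$ and $f_H := f|_{V(H)}$. Condition (i) of Theorem~\ref{thm:facetdescription} transfers immediately because every edge of $G$ (respectively $H$) is an edge of $G\vee H$. The substantive point is condition (ii): I need $E_{f_G} = E_f\cap E(G)$ to be a connected spanning subgraph of $G$, and similarly for $H$. This is where the wedge structure enters: $v$ is a cut vertex separating $V(G)\setminus\{v\}$ from $V(H)\setminus\{v\}$, so any walk in the connected spanning subgraph $E_f$ from $u\in V(G)$ back to $v$ that leaves $G$ must re-enter through $v$ and can therefore be truncated or rerouted to use only edges of $E(G)$. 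This yields the connectivity of $E_{f_G}$ on $V(G)$, and symmetrically for $H$.

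In the reverse direction, given facet-defining $f_G$ on $G$ and $f_H$ on $H$ with $f_G(v)=f_H(v)=0$, I glue them into a function $f$ on $V(G\vee H)$; well-definedness at $v$ is built in. Condition (i) is immediate since each edge of $G\vee H$ lies in exactly one of $G$ or $H$. Condition (ii) follows because $E_f = E_{f_G}\cup E_{f_H}$ is the union of two connected spanning subgraphs meeting at $v$, hence itself connected and spanning on $V(G\vee H)$. The two constructions are visibly inverse, so taking cardinalities gives $\facets(P_{G\vee H}) = \facets(\PG)\cdot \facets(P_H)$.

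The main obstacle is the cut-vertex argument in the forward direction, since it is precisely what forces the count to factor rather than merely bound one quantity by the product of the others. A pleasant byproduct is that the bijection makes no reference to the particular vertices chosen for identification, which justifies the remark preceding the proposition that any choice of wedge point yields the same facet count.
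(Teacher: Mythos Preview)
Your proof is correct, but it proceeds by a genuinely different route from the paper's. The paper's argument is polytope-theoretic and essentially a two-line citation: it invokes the fact that \(P_{G\vee H}\) is the \emph{free sum} (direct sum) \(P_G\oplus P_H\), as shown in \cite{osughitsuchiya2020}, together with the general fact that the number of facets is multiplicative under free sums \cite{HenkRichterZiegler}. Your argument instead stays entirely within the combinatorial framework of Theorem~\ref{thm:facetdescription}, building an explicit bijection between normalized facet-defining functions on \(G\vee H\) and pairs of such functions on \(G\) and \(H\). The cut-vertex step in your forward direction is correct; to make it airtight you can simply observe that any path in \(E_f\) from \(u\in V(G)\) to \(v\) must stay in \(V(G)\) until it first reaches \(v\), since every edge incident to a vertex of \(V(G)\setminus\{v\}\) lies in \(E(G)\). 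What the paper's approach buys is brevity and a structural explanation (the polytope literally decomposes), while your approach is more self-contained---it avoids importing the free-sum machinery and instead exercises the very facet description the paper relies on throughout. Your closing remark about independence from the choice of wedge point is a nice observation, though the paper's free-sum argument also yields this immediately.
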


\begin{proof}
This follows from the fact that \(P_{G\vee H}\) is the free sum \(\PG \oplus \PH\)~\cite[Proposition~4.2]{osughitsuchiya2020} (also called the direct sum) and the number of facets is multiplicative for free sums~\cite{HenkRichterZiegler}.
\end{proof}

\section{Graphs with Few Edges and Disjoint Cycles}\label{sec:disjoint_cycle}

We consider the symmetric edge polytopes for classes of connected graphs where the number of edges is small relative to the number of vertices.
For any tree \(T\) on \(n\) vertices, \(\facets(T)=2^{n-1}\) by Proposition~\ref{prop:wedgesmultiply} as \(T\) can be constructed as a wedge of \(n-1\) single edges with an appropriate choice of identification points. 
Thus, $\maxf(n,n-1)=2^{n-1}$.

Considering next the sequence $\maxf(n,n)$, any connected graph with an equal number of vertices and edges has a unique cycle, and hence can be constructed as a wedge of that cycle with trees. 
Therefore, we can count the facets of \(\PG\) for any such graph \(G\) and determine the maximum possible facet number arising from a graph with \(n\) vertices and \(n\) edges.

\begin{definition}\label{def:cnm}
Let \(C(n,m)\) denote a graph on \(n\) vertices obtained by joining an \(m\)-cycle with a path graph on \(n-m\) edges. 
\end{definition}

\begin{figure}
    \centering
\begin{tikzpicture}
    \vertex[fill](1) at (-1,2.9) {};
	\vertex[fill] (2) at (0,2) {};
	\vertex[fill] (3) at (1,2.5) {};
	\vertex[fill](4) at (-2,2) {};
	\vertex[fill](5) at (-1.5,0.8) {};
	\vertex[fill] (6) at (-0.5,0.8) {};
	\vertex[fill] (7) at (1,1.5) {};
	\draw[thick] (1)--(2) {};
	\draw[thick] (1)--(4) {};
	\draw[thick] (4)--(5) {};
	\draw[thick] (2)--(6) {};
	\draw[thick] (5)--(6) {};
	\draw[thick] (2)--(3) {};
	\draw[thick] (3)--(7) {};
\end{tikzpicture}
    \caption{\(C(7,5)\)}
    \label{fig:cnm}
\end{figure}

\begin{theorem}\label{thm:nnmax}
For any connected graph \(H\) with \(n\) vertices and \(n\) edges, the number of facets of \(P_H\) is less than or equal to the number of facets of \(\PG\) for \(G=C(n,n)\) when \(n\) is odd and \(G=C(n,n-1)\) when \(n\) is even. 
Thus, for odd $n$ 
\[
\maxf(n,n)=n\binom{n-1}{(n-1)/2} \, ,
\]
and for even $n$ 
\[
\maxf(n,n)= 2(n-1)\binom{n-2}{(n-2)/2}\, .
\]
\end{theorem}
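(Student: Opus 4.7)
The plan is to reduce the maximization to analyzing a single-variable function via wedge multiplicativity. Any connected graph $G$ on $n$ vertices with $n$ edges has exactly one cycle, of some length $m\in\{3,\ldots,n\}$; deleting the edges of that cycle leaves a forest whose $m$ tree components are each rooted at a single cycle vertex. Hence $G$ can be assembled from $C_m$ by iteratively wedging on $n-m$ single edges, added one leaf at a time. Applying Proposition~\ref{prop:wedgesmultiply} repeatedly, together with the observation that a single edge contributes $N=2$, yields
\[
\facets(G) \;=\; \facets(C_m)\cdot 2^{n-m} \;=\; \facets(P_{C(n,m)}).
\]
Thus $\facets(G)$ depends on $G$ only through the cycle length $m$, and it suffices to maximize over $m\in\{3,4,\ldots,n\}$.

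Define $\varphi(m) := \facets(C_m)\cdot 2^{n-m}$. Substituting the formulas of Lemma~\ref{lem:cyclefacets} and simplifying, a short calculation shows
\[
\frac{\varphi(m+1)}{\varphi(m)} \;=\; \begin{cases} (m+1)/2, & m \text{ even},\\ 2/(m+1), & m \text{ odd}.\end{cases}
\]
Since the smallest admissible even value is $m=4$, the first ratio is always at least $5/2$ and the second is at most $1/2$. Iterating, $\varphi$ is strictly increasing on the odd integers in $\{3,\ldots,n\}$ and strictly decreasing on the even integers there. So the odd maximum is attained at the largest odd integer $\leq n$, while the even maximum is attained at $m=4$.

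Finally, a direct evaluation gives $\varphi(3)=6\cdot 2^{n-3}$ and $\varphi(4)=6\cdot 2^{n-4}$, so $\varphi(3)>\varphi(4)$; combined with the monotonicity above, every odd value of $\varphi$ strictly exceeds every even value. The global maximum therefore occurs at the largest odd integer $\leq n$: namely $m=n$ (with $G=C_n=C(n,n)$) when $n$ is odd, and $m=n-1$ (with $G=C(n,n-1)$) when $n$ is even, matching the statement of the theorem. The only subtle point, really just bookkeeping, is justifying the wedge decomposition when the forest hanging off $C_m$ branches at several cycle vertices rather than forming a single path; a short induction on $n-m$ (peel one leaf at a time) handles it, and then one only needs to keep the boundary comparison $\varphi(3)$ vs $\varphi(4)$ straight so that the parity argument delivers the correct maximizer for each parity of $n$.
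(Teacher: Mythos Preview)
Your proof is correct and follows essentially the same approach as the paper: reduce via Proposition~\ref{prop:wedgesmultiply} to maximizing \(\varphi(m)=\facets(C_m)\cdot 2^{n-m}\), then compare values to show the largest odd \(m\) wins. The only cosmetic difference is that the paper packages the comparison as the chain \(\varphi(2k)<\varphi(2k-1)<\varphi(2k+1)\) using a common factor \(\mathcal{M}\), whereas you compute the consecutive ratios \(\varphi(m+1)/\varphi(m)\) and argue monotonicity within each parity plus the boundary check \(\varphi(3)>\varphi(4)\); both are equivalent and equally short.
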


\begin{proof}
A connected graph on \(n\) vertices and \(n\) edges has a unique cycle of length \(m\) for some \(3\leq m\leq n\).
Thus, \(G\) is the join of an \(m\)-cycle and \(n-m\) edges. 
By Proposition~\ref{prop:wedgesmultiply}, we have $\facets(G)=\facets(C(n,m))$.
For \(k\geq 2\), we claim 
\begin{equation}\label{eqn:cniineq}
\facets(C(n,2k))<\facets(C(n,2k-1))<\facets(C(n,2k+1))\, .
\end{equation}
In other words, if \(m\) is even, \(\facets(C(n,m-1))\) is greater than \(\facets(C(n,m))\). 
Also, if \(m\) is odd and \(m\leq n-2\), the graph \(C(n,m+2)\) exists, and \(\facets(C(n,m+2))\) is greater than \(\facets(C(n,m))\).  
With these two statements, we see that \(\facets(G)\) is maximized when \(G\) contains the largest odd cycle possible in a graph with \(n\) vertices.

To prove the inequality in~\eqref{eqn:cniineq}, let 
\[
\mathcal{M}=\frac{2^{n-(2k+1)}(2k-1)!}{(k!)^2}.
\]
Then, by Lemma~\ref{lem:cyclefacets} and Proposition~\ref{prop:wedgesmultiply}, 
\[
\facets(C(n,2k))=\facets(C_{2k})\cdot 2^{n-2k}=\binom{2k}{k}\cdot 2^{n-2k} = 4k\mathcal{M},
\]
\[
\facets(C(n,2k-1))=\facets(C_{2k-1})\cdot 2^{n-(2k-1)}=(2k-1)\binom{2k-2}{k-1}\cdot 2^{n-(2k-1)}=4k^2\mathcal{M},
\]
\[
\facets(C(n,2k+1))=\facets(C_{2k+1})\cdot 2^{n-(2k+1)}=(2k+1)\binom{2k}{k}\cdot 2^{n-(2k+1)}=(4k^2+2k)\mathcal{M},
\]
and the claim holds.
\end{proof}

We next consider the sequence $\maxf(n,n+1)$, which is substantially more challenging than the previous cases.
Any connected, simple graph with \(n\) vertices and \(n+1\) edges can be constructed from a tree on \(n\) vertices by adding two edges.  
Each of these additions induces a cycle in the graph.
For such graphs, we make the following definition and conjecture.

\begin{definition}\label{def:M(n)}
For \(n\geq 3\), let \(M(n)\) be the number of facets of \(\PG\) where 
\[
G:=\begin{cases}
C_{k+1}\vee C_{k-1}&n=2k-1,\text{ \(k\) even}\\
C_{k}\vee C_{k}  &n=2k-1,\text{ \(k\) odd}\\
C_{k+1}\vee C_{k-1}\vee e\;\;&n=2k,\text{ \(k\) even}\\
C_{k}\vee C_{k}\vee e &n=2k,\text{ \(k\) odd}
\end{cases}
\]
\end{definition}

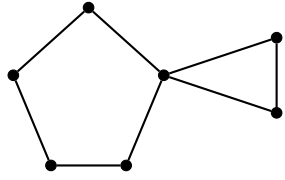
\begin{figure}
    \centering
\begin{tikzpicture}
    \vertex[fill](1) at (-1,2.9) {};
	\vertex[fill] (2) at (0,2) {};
	\vertex[fill] (3) at (1.5,2.5) {};
	\vertex[fill](4) at (-2,2) {};
	\vertex[fill](5) at (-1.5,0.8) {};
	\vertex[fill] (6) at (-0.5,0.8) {};
	\vertex[fill] (7) at (1.5,1.5) {};
	\draw[thick] (1)--(2) {};
	\draw[thick] (1)--(4) {};
	\draw[thick] (4)--(5) {};
	\draw[thick] (2)--(6) {};
	\draw[thick] (5)--(6) {};
	\draw[thick] (2)--(3) {};
	\draw[thick] (2)--(7) {};
	\draw[thick] (3)--(7) {};
\end{tikzpicture}
    \caption{A graph with \(\facets(G)=M(7)\)}
    \label{fig:M(7)}
\end{figure}

\begin{conjecture}\label{conj:nn+1}
For all $n\geq 3$, $\maxf(n,n+1)$ is equal to $M(n)$.
\end{conjecture}

Graphs with $n$ vertices and $n+1$ edges fall into two categories: graphs with exactly 2 edge-disjoint cycles, such as those defined in Definition~\ref{def:two cycle wedge graphs} below, and graphs where the cycles share one or more edges, such as those defined in Definition~\ref{def:gen CB} below.
In this section, we show that Conjecture~\ref{conj:nn+1} is true for the first category.

\begin{theorem}\label{thm: disjoint cycles less than M(n)}
For any connected graph \(H\) with \(n\) vertices and \(n+1\) edges where \(H\) contains two edge-disjoint cycles, we have \(\facets(H)\leq M(n)\).
\end{theorem}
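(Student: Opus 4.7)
The plan is to reduce the theorem to an explicit optimization over pairs of cycle lengths, then solve that optimization using a handful of ratio identities for $\alpha(m) := \facets(C_m)/2^m$. First I would observe that any connected graph $H$ on $n$ vertices and $n+1$ edges with two edge-disjoint cycles $C_a$ and $C_b$ consists of those two cycles together with either a shared vertex or a bridge path connecting them, plus (possibly) pendant trees; each bridge thereby introduced is the site of a wedge identification. Applying Proposition~\ref{prop:wedgesmultiply} iteratively, and using the fact that a single edge contributes a factor of $2$, yields
\[
\facets(H) = \facets(C_a) \cdot \facets(C_b) \cdot 2^{n+1-a-b}.
\]
The theorem is then equivalent to maximizing this expression over integer pairs with $3 \leq a \leq b$ and $a + b \leq n+1$.

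Factoring out the constant $2^{n+1}$ reduces the task to maximizing $\alpha(a)\alpha(b)$. Using Lemma~\ref{lem:cyclefacets}, direct computation gives, for every odd $m \geq 3$,
\[
\frac{\alpha(m+2)}{\alpha(m)} = \frac{m+2}{m+1}
\qquad\text{and}\qquad
\frac{\alpha(m+1)}{\alpha(m)} = \frac{2}{m+1}.
\]
From these I would extract three facts: (i) $\alpha$ is strictly increasing on odd integers $\geq 3$; (ii) $\alpha(m) > \alpha(m+1)$ for every odd $m \geq 3$; and (iii) since $(x+2)/(x+1)$ is strictly decreasing in $x$, for odd integers $m_1 \leq m_2 - 2$ the \emph{balancing inequality} $\alpha(m_1+2)\alpha(m_2-2) \geq \alpha(m_1)\alpha(m_2)$ holds, with equality only if $m_2 = m_1 + 2$.

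With these in hand, the optimization proceeds in three steps. If either coordinate of $(a,b)$ is even, replacing it by the odd integer one less (still at least $3$, since any even $m \geq 4$ yields $m - 1 \geq 3$) strictly increases the product by (ii); so the maximum is attained at a both-odd pair. Among both-odd pairs with a fixed even sum, (iii) shows that the most balanced split is optimal. Finally, a direct comparison (in two sub-cases, according to the parity of $s/2$) of the balanced both-odd split at sum $s$ versus at sum $s+2$ shows via (i) that the product strictly increases with $s$. The optimum is therefore the balanced both-odd split of the largest even sum not exceeding $n+1$: sum $n+1$ when $n$ is odd (with $t = 0$ extra edges) and sum $n$ when $n$ is even (with $t = 1$ extra edge). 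A short parity check then matches this to the four cases of Definition~\ref{def:M(n)}.

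The main obstacle I anticipate is the structural decomposition: one must verify that every admissible $H$, across all configurations of how its two edge-disjoint cycles sit inside it and any pendant trees, genuinely decomposes via Proposition~\ref{prop:wedgesmultiply} into $C_a$, $C_b$, and $n+1-a-b$ single-edge wedges. Once that reduction is in place, the remaining arithmetic is mechanical given the ratio identities for $\alpha$.
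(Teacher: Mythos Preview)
Your proposal is correct and shares its overall architecture with the paper's proof: both reduce to the formula $\facets(H)=\facets(C_a)\,\facets(C_b)\cdot 2^{n+1-a-b}$ via Proposition~\ref{prop:wedgesmultiply}, then establish that odd cycle lengths beat even ones (your fact~(ii) is the paper's Lemma~\ref{lem:odd cycle better than even}) and that balanced odd pairs beat unbalanced ones (your fact~(iii) is the paper's Lemma~\ref{lem:evenly distributed cycles}).

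Where you diverge is in the final ``increase the sum'' step. The paper handles this by an explicit telescoping product computation in the two odd-$n$ subcases, and then invokes the separately proved Theorem~\ref{thm:M(n) inequailty} ($2M(n)\leq M(n+1)$) to transfer the result to even $n$. Your route---normalizing to $\alpha(m)=\facets(C_m)/2^m$, using the ratio identity $\alpha(m+2)/\alpha(m)=(m+2)/(m+1)$ to get fact~(i), and directly comparing the balanced odd split at sum $s$ against that at sum $s+2$---is more uniform: it treats all parities at once and eliminates the need for Theorem~\ref{thm:M(n) inequailty} in this proof. The paper's approach, on the other hand, yields Theorem~\ref{thm:M(n) inequailty} as a standalone result, which it later reuses (Corollary~\ref{cor:noleaves}); your argument does not produce that inequality as a by-product. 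Your flagged obstacle, the structural decomposition, is exactly what the paper also leans on implicitly (block/cut-vertex decomposition forces the two edge-disjoint cycles to meet in at most one vertex, making every remaining edge a bridge), and your formula follows.
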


Note that Theorem~\ref{thm: disjoint cycles less than M(n)} states that, among connected graphs with \(n\) vertices and \(n+1\) edges containing disjoint cycles, a facet-maximizing family arises by creating a graph that as closely as possible resembles the wedge of two equal-length odd cycles. 
The proof relies on the following definition and lemmas.

\begin{definition}\label{def:two cycle wedge graphs}
Let \(G(n,i,j)\) denote the graph \(C_i\vee C_j\vee Q_{n+1-(i+j)}\).
Note that \(G(n,i,j)\) has \(n\) vertices and \(n+1\) edges.
\end{definition}

\begin{lemma}\label{lem:odd cycle better than even} 
If \(i\) is even, then 
\[
\facets(G(n,i,j)) < \facets(G(n,i-1,j)).
\] 
\end{lemma}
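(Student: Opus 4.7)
The plan is to reduce the inequality to a comparison of facet counts for the cycles $C_i$ and $C_{i-1}$ via the multiplicativity of $\facets$ under wedge, and then verify this reduced inequality using the closed-form formula in Lemma~\ref{lem:cyclefacets}. Since $G(n,i,j)=C_i\vee C_j\vee P_{n+1-(i+j)}$ is an iterated wedge and the path $P_k$ is a tree on $k+1$ vertices with $\facets(P_k)=2^k$, Proposition~\ref{prop:wedgesmultiply} gives
\[
\facets(G(n,i,j)) = \facets(C_i)\cdot \facets(C_j)\cdot 2^{\,n+1-(i+j)}.
\]
The analogous expression for $G(n,i-1,j)$ has one fewer edge in the cycle part and one more edge in the path part, so the common factor $\facets(C_j)\cdot 2^{\,n+1-(i+j)}$ cancels out, and the lemma reduces to showing
\[
\facets(C_i) < 2\,\facets(C_{i-1})
\]
for even $i$.

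Writing $i=2k$, so that $i\geq 4$ and $k\geq 2$, Lemma~\ref{lem:cyclefacets} gives $\facets(C_{2k})=\binom{2k}{k}$ and $\facets(C_{2k-1})=(2k-1)\binom{2k-2}{k-1}$. The key identity is the standard recursion
\[
\binom{2k}{k}=\frac{2(2k-1)}{k}\binom{2k-2}{k-1},
\]
which rearranges to
\[
\facets(C_{2k}) = \frac{2}{k}\,\facets(C_{2k-1}).
\]
Since $k\geq 2$, we have $2/k \leq 1 < 2$, establishing the desired strict inequality and completing the proof.

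There is no real obstacle here — the only point to be careful about is the bookkeeping on the path length (the wedge with $P_{n+1-(i+j)}$ versus $P_{n+2-(i+j)}$) so that the factor of $2$ from the extra path edge is correctly tracked. The hypothesis $i\geq 4$ is automatic because $i$ is even and cycle lengths in our setting are at least $3$.
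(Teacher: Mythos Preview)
Your proof is correct and follows essentially the same approach as the paper. The paper rewrites $G(n,i,j)$ as $C(n+1-j,i)\vee C_j$ and then invokes the already-established inequality~\eqref{eqn:cniineq} together with Proposition~\ref{prop:wedgesmultiply}, whereas you expand the wedge product fully and re-derive the underlying cycle inequality $\facets(C_{2k})<2\,\facets(C_{2k-1})$ directly from the binomial identity; the content is the same.
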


\begin{proof}
Note that \(\facets(G(n,i,j))=\facets(C(n+1-j, i)\vee C_j)\).
Because \(i\) is even, applying~\eqref{eqn:cniineq} and Proposition~\ref{prop:wedgesmultiply} yields
\[
\facets(C(n+1-j, i)\vee C_j)<\facets(C(n+1-j, i-1)\vee C_j)=\facets(G(n,i-1,j))\, ,
\]
which completes the proof.
\end{proof}

\begin{lemma}\label{lem:evenly distributed cycles}
For \(i,j,m,\ell\) odd with \(m < i\leq j<l\) and \(i+j=m+\ell\),
\[
\facets(C_m\vee C_\ell) < \facets(C_i\vee C_j).
\]
\end{lemma}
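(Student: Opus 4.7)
The plan is to reduce the inequality to a single-step ``exchange'' claim for odd cycles and iterate it $s = (i-m)/2$ times. By Proposition~\ref{prop:wedgesmultiply} and Lemma~\ref{lem:cyclefacets}, both sides of the target inequality are completely explicit: writing $\facets(C_{2p+1}) = (2p+1)!/(p!)^2$, the lemma becomes a purely numerical comparison of two products of such factorial expressions, where the hypothesis $m + \ell = i + j$ guarantees that the sums of the half-indices agree on the two sides.

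The main step is the exchange claim: for odd integers $x, y$ with $y \geq x + 4$,
\[
\facets(C_x)\,\facets(C_y) \;<\; \facets(C_{x+2})\,\facets(C_{y-2}).
\]
Setting $p = (x-1)/2$ and $q = (y-1)/2$, a routine cancellation of factorials collapses the ratio of the right-hand side to the left-hand side to $\tfrac{(2p+3)\,q}{(p+1)(2q+1)}$. Clearing denominators, this exceeds $1$ if and only if $q - p - 1 > 0$, i.e., $q \geq p + 2$, which is precisely the hypothesis $y \geq x + 4$.

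With the exchange in hand, I iterate it along the chain of pairs $(m + 2t,\, \ell - 2t)$ for $t = 0, 1, \ldots, s$, which starts at $(m, \ell)$ and ends at $(i, j)$. The exchange at step $t \to t + 1$ applies provided $(\ell - 2t) - (m + 2t) \geq 4$, i.e., $4t \leq \ell - m - 4$; the worst case $t = s - 1$ rearranges exactly to $2i \leq i + j$, i.e., $i \leq j$, which is the given hypothesis. Chaining the $s$ resulting strict inequalities produces the desired strict inequality. The only subtlety is the boundary case $i = j$, where the last exchange is applied with $y - x = 4$ exactly; since the sub-claim remains strict at the endpoint $q = p + 2$, this causes no trouble.

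The main work is the factorial cancellation in the exchange claim, which is routine; I do not anticipate any conceptual obstacle beyond verifying that the hypothesis $i \leq j$ is exactly (rather than just sufficiently) what is needed to validate every step of the iteration.
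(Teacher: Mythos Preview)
Your proof is correct and follows essentially the same approach as the paper: prove a single-step exchange inequality (the paper phrases it as $\facets(C_{i-2}\vee C_{j+2})<\facets(C_i\vee C_j)$ for $i\le j$) and then iterate/induct to reach the general case. Your factorial ratio $\tfrac{(2p+3)q}{(p+1)(2q+1)}>1\iff q\ge p+2$ is equivalent to the paper's comparison $\tfrac{j+2}{j+1}<\tfrac{i}{i-1}$, and you are somewhat more explicit than the paper in checking that the hypothesis of the exchange step is satisfied at every stage of the iteration.
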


\begin{proof}
We show this holds for \(m=i-2\) and \(\ell = j+2\), then the argument follows by induction.
By Lemma~\ref{lem:cyclefacets} and Proposition~\ref{prop:wedgesmultiply}
\[
\facets(C_i\vee C_j)=ij\binom{i-1}{\frac{i-1}{2}}\binom{j-1}{\frac{j-1}{2}},
\]
\[
\facets(C_{i-2}\vee C_{j+2})=(i-2)(j+2)\binom{i-3}{\frac{i-3}{2}}\binom{j+1}{\frac{j+1}{2}}.
\]

Letting \(\Ds\mathcal{M}=(i-2)j\binom{i-3}{\frac{i-3}{2}}\binom{j-1}{\frac{j-1}{2}}\), we see
\[
\facets(C_{i-2}\vee C_{j+2})=4\mathcal{M}\cdot\frac{j+2}{j+1}<4\mathcal{M}\cdot\frac{i}{i-1}=\facets(C_i\vee C_j).
\]
\end{proof}

We also make use of the following theorem.

\begin{theorem}\label{thm:M(n) inequailty}
For all \(n\) 
\[
2\cdot M(n)\leq M(n+1).
\]
\end{theorem}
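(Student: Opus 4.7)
The plan is to split into four subcases according to the parity of $n$ and of $k$ (where $n=2k-1$ or $n=2k$), and in each subcase reduce the desired inequality to a comparison involving the facet counts of the cycles appearing in Definition~\ref{def:M(n)}. The multiplicativity of facet counts under wedges (Proposition~\ref{prop:wedgesmultiply}) and the explicit formula for $N(P_{C_m})$ (Lemma~\ref{lem:cyclefacets}) will do essentially all of the work.

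When $n$ is odd, say $n=2k-1$, then $n+1=2k$ and the parity of $k$ does not change in passing from $n$ to $n+1$. Inspecting Definition~\ref{def:M(n)} in both subcases, the graph defining $M(n+1)$ is obtained from the graph defining $M(n)$ by wedging on a single additional edge. Since a single edge has symmetric edge polytope equal to a line segment with $2$ facets, Proposition~\ref{prop:wedgesmultiply} gives $M(n+1)=2\,M(n)$, so the inequality holds with equality.

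When $n$ is even, say $n=2k$, then $n+1=2(k+1)-1$ and the relevant value of $k$ shifts by one, so the cycle lengths in the defining graphs change. In the subcase $k$ even, $M(n)=2\,N(P_{C_{k+1}})N(P_{C_{k-1}})$ and $M(n+1)=N(P_{C_{k+1}})^2$, so the desired inequality reduces to $4\,N(P_{C_{k-1}})\leq N(P_{C_{k+1}})$; in the subcase $k$ odd, $M(n)=2\,N(P_{C_{k}})^2$ and $M(n+1)=N(P_{C_{k+2}})N(P_{C_{k}})$, so the desired inequality reduces to $4\,N(P_{C_{k}})\leq N(P_{C_{k+2}})$. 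Both reductions are special cases of the single claim
\[
4\,N(P_{C_j})\leq N(P_{C_{j+2}})\quad\text{for every odd }j\geq 1,
\]
which I would verify directly by substituting the odd-cycle formula from Lemma~\ref{lem:cyclefacets} and simplifying the resulting ratio of binomial coefficients to
\[
\frac{N(P_{C_{j+2}})}{N(P_{C_j})} \;=\; \frac{(j+2)}{j}\cdot\frac{\binom{j+1}{(j+1)/2}}{\binom{j-1}{(j-1)/2}} \;=\; \frac{4(j+2)}{j+1} \;>\; 4.
\]

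The main obstacle is not any single computation but the bookkeeping: the piecewise definition of $M(n)$ depends on the parity of both $n$ and $k$, so one must carefully track which of the four subcases arises, which cycle lengths appear, and whether a pendant edge is present, in order to see that every subcase collapses either to equality or to the single binomial inequality above.
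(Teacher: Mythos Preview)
Your proposal is correct and follows essentially the same approach as the paper: the odd-$n$ case is handled by equality via the pendant edge, and the even-$n$ case splits on the parity of $k$ and reduces to the same binomial inequality the paper proves, namely $(j+2)\binom{j+1}{(j+1)/2}\geq 4\,j\binom{j-1}{(j-1)/2}$ for odd $j$. Your packaging is slightly cleaner in that you unify the two even-$n$ subcases into the single claim $4\,N(P_{C_j})\leq N(P_{C_{j+2}})$, whereas the paper writes out each subcase separately with an auxiliary quantity $\mathcal{K}$, but the underlying computation is identical.
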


\begin{proof}
By  Lemma~\ref{lem:cyclefacets} and Proposition~\ref{prop:wedgesmultiply},
\[
M(n)=\begin{cases}
(k+1)(k-1)\binom{k}{\frac{k}{2}}\binom{k-2}{\frac{k-2}{2}}\;\; &n=2k-1,\; k\text{ even}\\
k^2\binom{k-1}{\frac{k-1}{2}}^2\;\; &n=2k-1,\; k\text{ odd}\\
2\cdot M(n-1)\;\; &n=2k.
\end{cases}
\]
When \(n\) is odd, \(2\cdot M(n)= M(n+1)\), and we are done.
When \(n\) is even, we consider two cases.

\noindent\emph{Case 1:} If \(n=2k\) with \(k\) even, \(n+1=2(k+1)-1\) with \(k+1\) odd. Therefore, letting  \(\Ds\mathcal{K}=(k+1)\binom{k}{\frac{k}{2}}\), we have
\[
 M(n)=2\cdot M(2k-1)=2(k+1)(k-1)\binom{k}{\frac{k}{2}}\binom{k-2}{\frac{k-2}{2}}=2(k-1)\binom{k-2}{\frac{k-2}{2}}\cdot\mathcal{K},    
\]
and
\[
M(n+1)=(k+1)^2\binom{k}{\frac{k}{2}}^2=\mathcal{K}^2 \, .
\]

Since 
\[
\begin{split}
\mathcal{K}&=(k+1)\binom{k}{\frac{k}{2}}=\frac{(k+1)k}{\left(\frac{k}{2}\right)^2}\cdot(k-1)\cdot\binom{k-2}{\frac{k-2}{2}}\\&=\frac{4(k+1)}{k}\cdot(k-1)\cdot\binom{k-2}{\frac{k-2}{2}}\geq 4\cdot(k-1)\cdot\binom{k-2}{\frac{k-2}{2}}\, ,    
\end{split}
\]
we see
\[
2\cdot M(n)=4(k-1)\binom{k-2}{\frac{k-2}{2}}\cdot \mathcal{K}\leq \mathcal{K}^2=M(n+1)\, .    
\]

\noindent\emph{Case 2:} If \(n=2k\) with \(k\) odd, \(n+1=2(k+1)-1\) with \(k+1\) even.
Therefore, letting \(\mathcal{K}=k\binom{k-1}{\frac{k-1}{2}}\), we have
\[
M(n)=2\cdot M(2k-1)=2k^2\binom{k-1}{\frac{k-1}{2}}^2=2\mathcal{K}^2
\]
and
\[
M(n+1)=(k+2)k\binom{k+1}{\frac{k+1}{2}}\binom{k-1}{\frac{k-1}{2}}=(k+2)\binom{k+1}{\frac{k+1}{2}}\cdot\mathcal{K}\, .
\]
Since 
\[
(k+2)\binom{k+1}{\frac{k+1}{2}}=\frac{(k+2)(k+1)}{\left(\frac{k+1}{2}\right)^2}\cdot k\cdot\binom{k-1}{\frac{k-1}{2}}=\frac{4(k+2)}{(k+1)}\cdot \mathcal{K}\geq 4\mathcal{K} \, ,
\]
we see
\[
2\cdot M(n)=4\mathcal{K}^2\leq (k+2)\binom{k+1}{\frac{k+1}{2}}\cdot\mathcal{K}=M(n+1)\, . 
\]
\end{proof}

With these in place, we have Theorem~\ref{thm: disjoint cycles less than M(n)}.

\begin{proof}[Proof of Theorem~\ref{thm: disjoint cycles less than M(n)}]
By Proposition~\ref{prop:wedgesmultiply}, any such graph \(H\) containing exactly two edge-disjoint cycles of length \(i\) and \(j\) satisfies \(\facets(H)=\facets(G(n,i,j))\).
Thus, it is sufficient to restrict our attention to \(G(n,i,j)\).
By Lemmas~\ref{lem:odd cycle better than even}~and~\ref{lem:evenly distributed cycles}, we can consider only \(G(n,i,j)\) for \(i,j\) odd and as close to \(\frac{i+j}{2}\) as possible. Without loss of generality, suppose \(i\leq j\).

\noindent\emph{Case 1: \(n=2k-1\), \(k\) even.} Note that, since \(i\) and \(j\) are both odd, \(i\leq k-1\) and \(j\leq k+1\).
Also, by  Lemma~\ref{lem:cyclefacets} and Proposition~\ref{prop:wedgesmultiply}, 
\[
\facets(G(n,i,j))=2^{n-(i+j)+1}ij\binom{i-1}{\frac{i-1}{2}}\binom{j-1}{\frac{j-1}{2}}
=2^{2k-(i+j)}\cdot\frac{i!j!}{\left(\frac{i-1}{2}!\right)^2\left(\frac{j-1}{2}!\right)^2} \, .
\]
Similarly,
\[
\begin{split}
    M(n)&=(k+1)(k-1)\binom{k}{\frac{k}{2}}\binom{k-2}{\frac{k-2}{2}}
    \\&=\frac{(k+1)!}{\left(\frac{k}{2}!\right)^2}\cdot \frac{(k-1)!}{\left(\frac{k-2}{2}!\right)^2}
    \\&=\frac{j!}{\left(\frac{j-1}{2}!\right)^2}\cdot\left(\prod_{\substack{\ell =j+1\\\ell\text{ even}}}^{k} \frac{(\ell + 1)\ell}{\left(\frac{\ell}{2}\right)^2} \right)
    \cdot\frac{i!}{\left(\frac{i-1}{2}!\right)^2}\cdot\left(\prod_{\substack{m =i+1\\m\text{ even}}}^{k-2} \frac{(m + 1)m}{\left(\frac{m}{2}\right)^2} \right)
    \\&=2^{2k-(i+j)}\cdot\frac{i!j!}{\left(\frac{i-1}{2}!\right)^2\left(\frac{j-1}{2}!\right)^2}\cdot\left(\prod_{\substack{\ell =j+1\\\ell\text{ even}}}^{k} \frac{\ell + 1}{\ell}\right)\cdot\left(\prod_{\substack{m =i+1\\m\text{ even}}}^{k-2}\frac{m+1}{m}\right)
    \\&\geq\facets(G(n,i,j)).
\end{split}
\]

\noindent\emph{Case 2: \(n=2k-1\), \(k\) odd.} Note that, by assumption, \(i\leq k\) and \(j\leq k\). 
Also, by  Lemma~\ref{lem:cyclefacets} and Proposition~\ref{prop:wedgesmultiply}, 
\[
\facets(G(n,i,j))=2^{n-(i+j)+1}ij\binom{i-1}{\frac{i-1}{2}}\binom{j-1}{\frac{j-1}{2}}
=2^{2k-(i+j)}\cdot\frac{i!j!}{\left(\frac{i-1}{2}!\right)^2\left(\frac{j-1}{2}!\right)^2} \, .
\]
Similarly,
\[
\begin{split}
    M(n)&=k^2\binom{k-1}{\frac{k-1}{2}}^2
    \\&=\frac{j!}{\left(\frac{j-1}{2}!\right)^2}\cdot\left(\prod_{\substack{\ell =j+1\\\ell\text{ even}}}^{k-1} \frac{(\ell + 1)\ell}{\left(\frac{\ell}{2}\right)^2} \right)
    \cdot\frac{i!}{\left(\frac{i-1}{2}!\right)^2}\cdot\left(\prod_{\substack{m =i+1\\m\text{ even}}}^{k-1} \frac{(m + 1)m}{\left(\frac{m}{2}\right)^2} \right)
    \\&=2^{2k-(i+j)}\cdot\frac{i!j!}{\left(\frac{i-1}{2}!\right)^2\left(\frac{j-1}{2}!\right)^2}\left(\prod_{\substack{\ell =j+1\\\ell\text{ even}}}^{k-1} \frac{\ell + 1}{\ell}\right)\cdot\left(\prod_{\substack{m =i+1\\m\text{ even}}}^{k-1}\frac{m+1}{m}\right)
    \\&\geq\facets(G(n,i,j)).
\end{split}
\]

\noindent\emph{Case 3: \(n=2k\).} By Lemma~\ref{lem:cyclefacets} and Proposition~\ref{prop:wedgesmultiply}, 
\[
\facets(G(n,i,j))=2^{n-(i+j)+1}ij\binom{i-1}{\frac{i-1}{2}}\binom{j-1}{\frac{j-1}{2}} = 2\cdot\facets(G(n-1,i,j))\, .
\]
Also, by Theorem~\ref{thm:M(n) inequailty} and the previous cases,
\[
\begin{split}
    M(n)&\geq 2\cdot M(n-1)
    \\&\geq 2\cdot \facets(G(n-1,i,j))
    \\&= \facets(G(n,i,j)).
\end{split}
\]

Thus, in every case, \(\facets(G(n,i,j))\leq M(n)\).
\end{proof}

\section{Graphs with Few Edges and Overlapping Cycles}\label{sec:overlap_cycle}

We next consider the family of graphs on \(n\) vertices and \(n+1\) edges that have two cycles intersecting in at least one edge.
Note that the case where two cycles intersect in exactly one edge was studied in~\cite[Section 5]{chen2021facets}.
Theorem~\ref{thm:M(n) inequailty} allows us to reduce Conjecture~\ref{conj:nn+1} to the case where \(G\) has no vertices of degree one, as follows.

\begin{corollary}\label{cor:noleaves}
If Conjecture~\ref{conj:nn+1} is true for graphs on \(n\) vertices, then it is true for graphs on \(n+1\) vertices that have at least one leaf.
\end{corollary}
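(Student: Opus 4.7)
The plan is to reduce to a simple wedge decomposition. Let $G$ be a connected graph on $n+1$ vertices and $(n+1)+1=n+2$ edges with at least one leaf $v$, and let $e=\{u,v\}$ be the pendant edge. Write $G'=G-v$, the graph obtained by deleting $v$ together with $e$. Then $G'$ is connected (removing a leaf preserves connectivity), has $n$ vertices and $n+1$ edges, and $G$ can be presented as the wedge $G=G'\vee e$ by identifying $u\in G'$ with an endpoint of $e$.

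From this decomposition I would apply Proposition~\ref{prop:wedgesmultiply} to get
\[
\facets(G)=\facets(G')\cdot \facets(e)=2\cdot \facets(G'),
\]
since the edge $e$ (viewed as a graph on two vertices) has exactly two facets. By the assumed validity of Conjecture~\ref{conj:nn+1} for graphs on $n$ vertices, $\facets(G')\leq M(n)$, so $\facets(G)\leq 2M(n)$. Finally, Theorem~\ref{thm:M(n) inequailty} gives $2M(n)\leq M(n+1)$, and chaining the two inequalities yields $\facets(G)\leq M(n+1)$, which is exactly the statement of Conjecture~\ref{conj:nn+1} for $G$.

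There is no genuine obstacle here: the corollary is a direct consequence of the multiplicativity of facet counts under wedges together with the already-established inequality $2M(n)\leq M(n+1)$. The only small point to verify carefully is that $G'$ indeed has the right number of edges to qualify for the inductive hypothesis, namely $|E(G')|=|V(G')|+1$, which is immediate from the bookkeeping above.
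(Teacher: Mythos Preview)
Your proof is correct and follows essentially the same approach as the paper: remove the pendant edge to obtain a graph on $n$ vertices and $n+1$ edges, apply Proposition~\ref{prop:wedgesmultiply} to get $\facets(G)=2\cdot\facets(G')$, then invoke the hypothesis and Theorem~\ref{thm:M(n) inequailty}. Your write-up is slightly more explicit about why $G'$ is connected and why a single edge contributes a factor of $2$, but the argument is the same.
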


\begin{proof}
Let \(G\) be a graph on \(n+1\) vertices and \(n+2\) edges that has a leaf \(e\). 
Then \(G\setminus\{e\}\) is a graph with \(n\) vertices and \(n+1\) edges, and by assumption \(\facets(G\setminus\{e\})\leq M(n)\).
By Proposition~\ref{prop:wedgesmultiply} and Theorem~\ref{thm:M(n) inequailty},
\[
\facets(G)=2\cdot\facets(G\setminus\{e\})\leq 2\cdot M(n)\leq M(n+1).
\]
\end{proof}

Corollary~\ref{cor:noleaves} allows us to restrict our attention to graphs with no leaves. 
Any graph on \(n\) vertices and \(n+1\) edges with no leaves that contains cycles sharing one or more edges can be interpreted as three internally disjoint paths connected at their endpoints. 
We consider these as a special case of a more general construction.

\begin{definition}\label{def:gen CB}
For a vector \(\mathbf{m}\in \N^t\), let \(\CB(\mathbf{m})\) denote the graph made of \(t\) internally disjoint paths of lengths \(m_1, m_2,\dots, m_t\) connecting two endpoints.
\end{definition}

Note that when $t=3$, we obtain the leafless connected graphs with $n$ vertices and $n+1$ edges.

\begin{remark}
The graphs \(\CB(\mathbf{m})\) for which all entries of \(\mathbf{m}\) are the same \(m\in \N\) are sometimes called \emph{theta graphs}, denoted by \(\theta_{m,t}\)~\cite{thetagraphs}.
\end{remark}

\begin{prop}\label{prop: same parity}
For \(\mathbf{m}\in \N^t\), we may permute the entries so that without loss of generality we have \(m_1\geq m_2\geq\cdots\geq m_t\).  
If all the \(m_i\)'s have the same parity, \(\facets(\CB(\mathbf{m}))\) is given by 
\[
F(\mathbf{m})=\sum_{j=0}^{m_t} \binom{m_t}{j}\left[\prod_{k=1}^{t-1}\binom{m_k}{\frac{1}{2}(m_k-m_t)+j}\right].
\]
\end{prop}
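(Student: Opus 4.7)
The plan is to reduce the count of facets to a simple binomial convolution by exploiting the bipartiteness that the common-parity hypothesis forces on \(\CB(\mathbf{m})\).

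First I would observe that every cycle in \(\CB(\mathbf{m})\) is formed by two of the internally disjoint paths and therefore has length \(m_i+m_j\), which is even under the common-parity hypothesis; hence \(\CB(\mathbf{m})\) is bipartite. A short argument then shows that \(\CB(\mathbf{m})\) is its own unique maximal connected spanning bipartite subgraph: any proper connected spanning subgraph \(H\) is bipartite (as a subgraph of a bipartite graph), and any missing edge can be appended to \(H\) without destroying connectedness or bipartiteness, so \(H\) is not maximal. By Lemma~\ref{lem:facetsubgraphs} the only facet subgraph is \(\CB(\mathbf{m})\) itself, and Theorem~\ref{thm:facetdescription} then forces every facet-defining function \(f\) to satisfy \(|f(u)-f(v)|=1\) on \emph{every} edge of \(\CB(\mathbf{m})\).

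Next, using the shift freedom I would normalize \(f(a)=0\) where \(a\) and \(b\) denote the two shared endpoints of the paths, and set \(s:=f(b)\). The condition \(|f(u)-f(v)|=1\) on every edge forces the restriction of \(f\) to each path of length \(m_k\) to be a \(\pm 1\) lattice walk of length \(m_k\) from \(0\) to \(s\), of which there are exactly \(\binom{m_k}{(m_k+s)/2}\). Consistency across the \(t\) paths demands \(|s|\le m_k\) and \(s\equiv m_k\pmod 2\) for every \(k\); by the common-parity hypothesis this reduces to the single constraint \(s\in\{-m_t,-m_t+2,\dots,m_t\}\). Reindexing via \(s=2j-m_t\) with \(j\in\{0,1,\dots,m_t\}\) rewrites each single-path count as \(\binom{m_k}{\tfrac12(m_k-m_t)+j}\); multiplying over \(k\) and summing over \(j\) yields the stated formula \(F(\mathbf{m})\).

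There is no real obstacle here: once common parity is seen to force bipartiteness, and hence every facet function to use only unit steps on each path, the remainder is binomial bookkeeping and a check that every assignment of such walks does in fact yield a valid \(f\) (which is immediate, since \(E_f\) is then the whole connected graph \(\CB(\mathbf{m})\)). The one subtlety worth flagging is that the summation index is most naturally understood as running over \(0\le j\le m_t\), which is the binding parity/length constraint inherited from \(|s|\le m_k\) for all \(k\).
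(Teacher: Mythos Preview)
Your proof is correct and follows the same route as the paper: bipartiteness from the common-parity hypothesis forces every facet-defining function to have unit difference on every edge, after which counting \(\pm 1\) walks on each path with a common endpoint sum yields the product of binomials summed over \(j\). The only cosmetic difference is that the paper cites an external lemma for the unit-difference fact whereas you derive it directly from Theorem~\ref{thm:facetdescription} and Lemma~\ref{lem:facetsubgraphs}; your flag that the summation should run up to \(m_t\) rather than \(t\) also matches what the paper's proof actually does.
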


\begin{proof}
Consider \(\CB(\mathbf{m})\) as consisting of paths \(Q_1,Q_2,\dots,Q_t\) having \(m_1\geq \cdots \geq m_t\) edges respectively, as shown in Figure~\ref{fig:cb gen}.
Since all \(m_i\) are the same parity, \(\CB(\mathbf{m})\) is bipartite.
For every facet-defining function \(f:V\rightarrow \Z\), we have \(|f(u)-f(v)|=1\) for every edge \(uv\) in \(\CB(\mathbf{m})\)~\cite[Lem.~4.5]{dalidelucchimichalek}. 
If we consider the paths as oriented away from the top vertex toward the bottom vertex, we can view each edge \(u\rightarrow v\) as ascending (\(f(v)-f(u)=1\)), and label it \(1\), or descending (\(f(v)-f(u)=-1\)), and label it \(-1\).

We count facets by finding \emph{valid} labelings of the edges of \(\CB(\mathbf{m})\) with \(\pm 1\), that is labelings such that the sum of the labels on every path is the same.
For a labeling of a shortest path with length \(m_t\) using \(j\) \(-1\)s and \(m_t-j\) \(1\)s, the sum of the edge labels is \(m_t-2j\).
There are \(\binom{m_t}{j}\) labelings of this path with this sum.

To produce a valid labeling of the entire graph with each path sum equal to \(m_t-2j\), the number of \(-1\)s, say \(y\), on a path of length \(m_k\) must satisfy the equation:
\[
\begin{split}
    (+1)(m_k-y)+(-1)y&=m_t-2j\\
    y&=\frac{1}{2}(m_k-m_t)+j.
\end{split}
\]
Thus there are \(\Ds\binom{m_k}{\frac{1}{2}(m_k-m_t)+j}\) labelings of a path of length \(m_k\) with label sum \(m_t-2j\).  Applying this argument to \(m_k\) for \(k=1,\dots, t-1\) gives
\[
\binom{m_t}{j}\prod_{k=1}^{t-1}\binom{m_k}{\frac{1}{2}(m_k-m_t)+j}
\]
valid labelings of \(\CB(\mathbf{m})\) with \(j\) \(-1\)s on the shortest path. The result follows by taking the sum over all \(j=0,\dots, m_t\).
\end{proof}

Note that there is a combinatorial interpretation for \(F\), where we consider the arithmetical triangle of binomial coefficients vertically centered at the central terms.
What \(F\) does is select the \(m_t\)-th row of the arithmetical triangle, multiply each entry by the vertically-aligned entries in rows \(m_1\) through \(m_{t-1}\), and sum the resulting products.

For \(\CB(\mathbf{m})=\theta_{m,t}\) where all the paths are the same length, this formula simplifies.

\begin{corollary}
For \(t\in \N\),
\[
\facets(\theta_{m,t})=\sum_{j=0}^m\binom{m}{j}^t.
\]
\end{corollary}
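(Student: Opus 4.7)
The plan is to obtain the corollary as an immediate specialization of Proposition~\ref{prop: same parity}, since $\theta_{m,t} = \CB(\mathbf{m})$ for the constant vector $\mathbf{m} = (m,m,\ldots,m) \in \N^t$. The hypothesis of the proposition is trivially satisfied, because all entries of $\mathbf{m}$ are equal and hence share a common parity, so the formula $F(\mathbf{m})$ applies directly.

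Next I would substitute $m_k = m$ for every $k$ into
\[
F(\mathbf{m}) = \sum_{j=0}^{m_t} \binom{m_t}{j}\prod_{k=1}^{t-1}\binom{m_k}{\tfrac{1}{2}(m_k-m_t)+j}.
\]
Since $m_k - m_t = 0$ for each $k$, the shift $\tfrac{1}{2}(m_k-m_t)+j$ collapses to $j$, so every binomial factor becomes $\binom{m}{j}$. The leading $\binom{m_t}{j}$ contributes one further copy of $\binom{m}{j}$, and the inner product contributes $t-1$ additional copies, for a total of $\binom{m}{j}^t$. Summing from $j=0$ to $m$ yields the claimed identity.

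There is essentially no obstacle here; the only thing to verify is the bookkeeping of the exponent, which follows from counting one factor from the outer binomial and $t-1$ from the product. As a sanity check, the formula recovers known small cases: for $t=1$ it gives $\sum_{j=0}^m \binom{m}{j} = 2^m$, matching the facet count of the path $P_m$ via Proposition~\ref{prop:wedgesmultiply}; for $t=2$ it gives $\sum_{j=0}^m \binom{m}{j}^2 = \binom{2m}{m}$ by the Vandermonde identity, matching Lemma~\ref{lem:cyclefacets} for the even cycle $C_{2m}$.
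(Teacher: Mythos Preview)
Your proposal is correct and matches the paper's approach: the corollary is stated immediately after Proposition~\ref{prop: same parity} as a direct specialization with no separate proof, and your substitution of $m_k=m$ for all $k$ (collapsing the shift to $j$ and counting $t$ copies of $\binom{m}{j}$) is exactly the intended argument. The sanity checks for $t=1$ and $t=2$ are a nice addition but not needed.
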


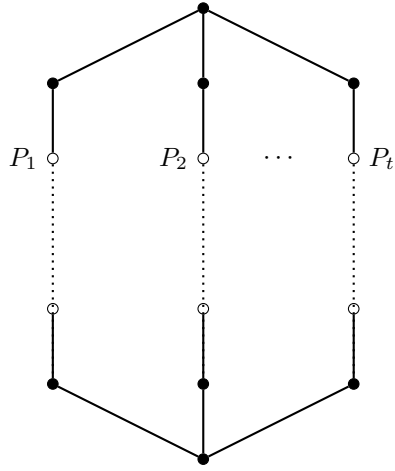
\begin{figure}
\begin{center}
\begin{tikzpicture}
\begin{scope}[scale=1, xshift=0, yshift=0]
	\vertex[fill](a1) at (0,3) {};
	\vertex[fill](a2) at (0,2) {};
	\vertex[fill](a6) at (0,-2) {};
	\vertex[fill](a7) at (0,-3) {};
    \vertex[fill](l2) at (-2,2) {};
	\vertex[fill](l6) at (-2,-2) {};
	\vertex[fill](r2) at (2,2) {};
	\vertex[fill](r6) at (2,-2) {};
	\vertex[label=left:\(Q_{1}\)] (l4) at (-2,1) {};
	\vertex (l5) at (-2,-1) {};
	\vertex[label=left:\(Q_{2}\)] (a4) at (0,1) {};
	\vertex (a5) at (0,-1) {};
	\vertex[label=right:\(Q_{t}\)] (r4) at (2,1) {};
	\vertex (r5) at (2,-1) {};
	\node[] at (1,1) {\(\cdots\)};
	\draw[thick] (a1)--(a2);
	\draw[thick] (a1)--(r2);
	\draw[thick] (a1)--(l2);
	\draw[thick] (a2)--(a4);
	\draw[thick] (r2)--(r4);
	\draw[thick] (l2)--(l4);
	\draw[thick, dotted] (a4)--(a6);
	\draw[thick, dotted] (r4)--(r6);
	\draw[thick, dotted] (l4)--(l6);
	\draw[thick] (a5)--(a6);
	\draw[thick] (r5)--(r6);
	\draw[thick] (l5)--(l6);
	\draw[thick] (a6)--(a7);
	\draw[thick] (a7)--(r6);
	\draw[thick] (a7)--(l6);
\end{scope}
\end{tikzpicture}
\end{center}
\caption{\(\CB(\mathbf{m})\) for \(\mathbf{m}=(m_1,\dots, m_t)\)}
\label{fig:cb gen}
\end{figure}

If the vector \(\mathbf{m}\) has both even and odd entries, counting the facets of \(\CB(\mathbf{m})\) becomes more complicated, but still involves \(F\).

\begin{prop}\label{prop:facets of gen cb mixed}
For \(\mathbf{m}\in \N^t\), permute the entries so that \(\mathbf{m}=(e_1,\dots, e_k,o_1,\dots, o_\ell)\) with \(e_1\geq e_2\geq\cdots\geq e_k\) even and \(o_1\geq o_2\geq\cdots\geq o_\ell\) odd and \(k,\ell\geq 1\), \(k+\ell=t\). 
Also, let \(\mathbf{m}_e\) be the vector obtained by subtracting \(1\) from every even entry of \(\mathbf{m}\), and \(\mathbf{m}_o\) the vector obtained by subtracting \(1\) from every odd entry of \(\mathbf{m}\).
\begin{enumerate}
    \item[(i)] If all entries of \(\mathbf{m}\) are at least 2,
    \[
    \facets(\CB(\mathbf{m})) = \left(\prod_{j=1}^k e_j\right)F(\mathbf{m}_e)+ \left(\prod_{j=1}^\ell o_j\right)F(\mathbf{m}_o).
    \]
    \item[(ii)] If \(o_{p+1}=\cdots=o_\ell=1\) (and \(o_p>1\)),
    \[
    \facets(\CB(\mathbf{m})) = \left(\prod_{j=1}^k e_j\right)F(\mathbf{m}_e)+ \left(\prod_{j=1}^\ell o_j\right)\facets\left(\left(\bigvee_{j=1}^k C_{e_j}\right)\vee \left(\bigvee_{j=1}^p C_{o_j-1}\right) \right).
    \]
\end{enumerate}
\end{prop}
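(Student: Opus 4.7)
The plan is to apply Lemma~\ref{lem:facetsubgraphs} to classify the maximal connected spanning bipartite subgraphs of $\CB(\mathbf{m})$ and then count the facet-defining functions associated with each. Any simple cycle of $\CB(\mathbf{m})$ has the form $P_i \cup P_j$ for $i \ne j$ (internal vertices have degree $2$, so an elementary cycle descends along one path and returns along another) and has length $m_i + m_j$. Hence a spanning subgraph $H$ is bipartite iff the paths that remain \emph{intact} in $H$ all share the same length parity. Combined with maximality this yields exactly two families of facet subgraphs in the mixed-parity setting: \emph{Type~E}, in which all $k$ even paths are intact and each odd path has exactly one edge removed, and \emph{Type~O}, in which all $\ell$ odd paths are intact and each even path has exactly one edge removed. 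Removing two or more edges from some broken path would admit a re-insertion keeping $H$ bipartite (violating maximality), and including two intact paths of different parities would create an odd cycle. Thus there are $\prod_{i=1}^\ell o_i$ Type~E subgraphs and $\prod_{j=1}^k e_j$ Type~O subgraphs.

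Given a facet subgraph $H$, I count the functions $f$ with $E_f = H$ by normalizing $f(s)=0$ and setting $S = f(t)$. Along an intact path of length $m$ the $\pm 1$ edge labels must sum to $S$, giving $\binom{m}{(m+S)/2}$ labelings; along a path of length $m$ broken at position $p$ the removed edge is a forced $0$-step, and Vandermonde's identity
\[
\sum_{u}\binom{p-1}{u}\binom{m-p}{\tfrac{m-1+S}{2}-u} \;=\; \binom{m-1}{\tfrac{m-1+S}{2}}
\]
shows that the number of labelings of the two segments is $\binom{m-1}{(m-1+S)/2}$, independent of $p$. Consequently every Type~O subgraph contributes the same total $\sum_S \prod_{i=1}^\ell \binom{o_i}{(o_i+S)/2}\prod_{j=1}^k \binom{e_j - 1}{(e_j - 1+S)/2}$, summed over odd $S$. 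Substituting $S = m_t - 2j$ into the formula of Proposition~\ref{prop: same parity} yields the reformulation $F(\mathbf{m}) = \sum_S \prod_k \binom{m_k}{(m_k+S)/2}$, which identifies the Type~O total with $\left(\prod_{j=1}^k e_j\right) F(\mathbf{m}_e)$. The symmetric argument identifies the Type~E total with $\left(\prod_{i=1}^\ell o_i\right) F(\mathbf{m}_o)$, proving Case~(i).

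For Case~(ii) the only modification concerns Type~E: breaking a length-$1$ odd path removes the unique $s$--$t$ edge on that path, so $f(s)=f(t)$ and hence $S=0$. Substituting $S=0$ into the Type~E per-subgraph sum makes every factor $\binom{o_i - 1}{(o_i - 1 + S)/2}$ with $o_i = 1$ equal to $\binom{0}{0}=1$, leaving $\prod_{j=1}^k \binom{e_j}{e_j/2} \prod_{j=1}^p \binom{o_j - 1}{(o_j-1)/2}$. By Lemma~\ref{lem:cyclefacets} and Proposition~\ref{prop:wedgesmultiply} this product equals $\facets\!\left(\left(\vee_{j=1}^k C_{e_j}\right) \vee \left(\vee_{j=1}^p C_{o_j - 1}\right)\right)$. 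Multiplying by the number $\prod_{j=1}^\ell o_j$ of Type~E subgraphs and adding the unchanged Type~O contribution $\left(\prod_{j=1}^k e_j\right) F(\mathbf{m}_e)$ yields the Case~(ii) formula. The main obstacle in executing this plan is recognizing that the Vandermonde collapse makes the per-subgraph count independent of the break position, so that the total facets factor as (number of subgraphs) times (common count per subgraph); once this is established, the remaining steps are bookkeeping.
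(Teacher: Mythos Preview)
Your argument is correct and follows the same route as the paper's proof: both classify the facet subgraphs via Lemma~\ref{lem:facetsubgraphs} into the two families (all even paths intact versus all odd paths intact, with exactly one edge deleted from each path of the other parity) and then count the facet-defining functions family by family. The only stylistic difference is that the paper handles a broken path by \emph{contracting} the $0$-labeled edge and invoking Proposition~\ref{prop: same parity} on the resulting $\CB(\mathbf{m}_e)$ or $\CB(\mathbf{m}_o)$ (respectively, the wedge of cycles in Case~(ii)), whereas you compute the per-subgraph count directly via Vandermonde's identity to show it is independent of the break position and then match it against your reformulation $F(\mathbf{m})=\sum_S\prod_k\binom{m_k}{(m_k+S)/2}$; these two devices are equivalent.
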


\begin{proof}
Consider \(\CB(\mathbf{m})\) as in Figure~\ref{fig:cb gen}.
As in the proof of Proposition~\ref{prop: same parity}, we will count facets of \(P_{\CB(\mathbf{m})}\) by counting valid labelings of the facets subgraphs of \(\CB(\mathbf{m})\).
These subgraphs are those in which
\begin{enumerate}
    \item one edge of every even length path has been removed, or
    \item one edge of every odd length path has been removed.
\end{enumerate}
We can view these as labelings of \(\CB(\mathbf{m})\) where the sum of labels on each \(Q_i\) is equal, and all edges must be labeled with \(\pm 1\) except
\begin{enumerate}
    \item one edge on each even path is labeled 0, or
    \item one edge on each odd path is labeled 0. 
\end{enumerate}
In (1), there are \(\Ds\prod_{j=1}^k e_j\) ways to choose the edges to label 0.
Having the edge \(uv\) labeled \(0\) indicates that \(f(u)=f(v)\) in the corresponding facet-defining function \(f:V\rightarrow \Z\). 
Thus, we can view this edge as having been contracted since its endpoints have the same value. 
Then the reduced graph with these edges contracted is \(\CB(\mathbf{m}_e)\), constructed of paths that all have odd length. 
So, by Proposition~\ref{prop: same parity}, the number of valid labelings of \(\CB(\mathbf{m})\) where each even path has a \(0\) edge is 
\[
\left(\prod_{j=1}^k e_j\right)F(\mathbf{m}_e).
\]

In (2), there are \(\Ds\prod_{j=1}^\ell o_j\) ways to choose the edges to label 0. 
If every entry of \(\mathbf{m}\) is at least 2, the graph produced by contracting these \(0\) edges is \(\CB(\mathbf{m}_o)\), constructed of paths that all have even length. 
As above, the number of valid labelings of \(\CB(\mathbf{m})\) of this type is 
\[
\left(\prod_{j=1}^\ell o_j\right)F(\mathbf{m}_o).
\]

Thus, in case (i), 
\[
\facets(\CB(\mathbf{m}))=\left(\prod_{j=1}^k e_j\right)F(\mathbf{m}_e)+\left(\prod_{j=1}^\ell o_j\right)F(\mathbf{m}_o).
\]

To complete case (ii), note that if we contract any edge on a path of length \(1\), the endpoints of the remaining paths are identified, and the reduced graph is a wedge of cycles. 
In particular, if \(o_{p+1}=\cdots=o_\ell=1\) and \(o_p>1\), the reduced graph is \(\Ds \left(\bigvee_{j=1}^k C_{e_j}\right)\vee \left(\bigvee_{j=1}^p C_{o_j-1}\right)\). 
So the number of valid labelings of \(\CB(\mathbf{m})\) of this type is 
\[
\facets\left(\left(\bigvee_{j=1}^k C_{e_j}\right)\vee \left(\bigvee_{j=1}^p C_{o_j-1}\right)\right).
\]

Therefore, in case (ii),
\[
\facets(\CB(\mathbf{m})) = \left(\prod_{j=1}^k e_j\right)F(\mathbf{m}_e)+ \left(\prod_{j=1}^\ell o_j\right)\facets\left(\left(\bigvee_{j=1}^k C_{e_j}\right)\vee \left(\bigvee_{j=1}^p C_{o_j-1}\right) \right).
\]

\end{proof}

Returning to the special case of leafless connected graphs on $n$ vertices with $n+1$ edges, specializing to \(t=3\) provides facet counts for our graphs of interest.

\begin{corollary}\label{cor:facets of coffee beans}
The number of facets of the symmetric edge polytope for \(\CB(x_1,x_2,x_3)\) is computed as follows.
\begin{enumerate}
\item[(i)] For \(x_1, x_2, x_3\) either all even or all odd, 
\[
\facets(\CB(x_1,x_2,x_3))=F(x_1, x_2, x_3)\, .
\]

\item[(ii)] For \(o_1, o_2\) odd, \(e_1\) even, and all at least 2,
\[
\facets(\CB(o_1,o_2,e_1))=e_1F(o_1,o_2,e_1-1)+o_1o_2F(o_1-1,o_2-1,e_1)\, .
\]
For \(e_1, e_2\) even, \(o_1\) odd, and all at least 2,
\[
\facets(\CB(e_1,e_2,o_1))=o_1F(e_1,e_2,o_1-1)+e_1e_2F(e_1-1,e_2-1,o_1)\, .
\]

\item[(iii)]For \(e_1, e_2\) even, and \(o_1=1\),
\[
\facets(\CB(e_1,e_2,1))= e_1e_2F(e_1-1,e_2-1,1)+\facets(C_{e_1}\vee C_{e_2})
\]

\item[(iv)] For \(e_1\) even, \(o_1\geq 3\) odd,
\[
\facets(\CB(e_1,o_1,1))=e_1F(e_1-1,o_1,1)+o_1\facets(C_{o_1-1}\vee C_{e_1})
\]

\item[(v)] For \(e_1\) even,
\[
\facets(\CB(e_1,1,1))= e_1F(e_1-1,1,1)+\facets(C_{e_1})
\]
\end{enumerate}
\end{corollary}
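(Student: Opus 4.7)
The plan is to observe that Corollary~\ref{cor:facets of coffee beans} is essentially a bookkeeping exercise: each of its five cases falls out by direct substitution into one of Propositions~\ref{prop: same parity} and~\ref{prop:facets of gen cb mixed} specialized to $t=3$. Before doing the substitutions, I would record one simple symmetry: the function $F(\mathbf{m})$ is invariant under permutations of the entries of $\mathbf{m}$. This is not completely obvious from the defining formula
\[
F(\mathbf{m}) = \sum_{j=0}^{m_t}\binom{m_t}{j}\prod_{k=1}^{t-1}\binom{m_k}{\tfrac{1}{2}(m_k-m_t)+j},
\]
but it is clear from the counting interpretation used to prove Proposition~\ref{prop: same parity}: $F(\mathbf{m})$ counts the $\pm 1$-labelings of the paths $P_1,\dots,P_t$ all of whose path-sums agree, and this total obviously does not depend on the order in which the path lengths are listed. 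Thus I may write $F(x_1,x_2,x_3)$ freely without worrying about the ordering convention of Proposition~\ref{prop: same parity}.

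Case~(i), when $x_1,x_2,x_3$ all share a common parity, is then literally the conclusion of Proposition~\ref{prop: same parity} at $t=3$. For cases~(ii)--(v) I would appeal to Proposition~\ref{prop:facets of gen cb mixed}, separating the entries into even and odd blocks and checking which of parts~(i) and~(ii) of that proposition applies. In case~(ii), all entries are at least $2$, so part~(i) applies directly: rewriting $\mathbf{m}_e$ and $\mathbf{m}_o$ gives $e_1F(e_1-1,o_1,o_2) + o_1o_2F(e_1,o_1-1,o_2-1)$ in the first sub-case, and symmetrically in the second, matching the stated formulas after invoking the symmetry of $F$.

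Cases~(iii)--(v) fall under part~(ii) of Proposition~\ref{prop:facets of gen cb mixed}, since at least one of the odd entries equals $1$. The only thing to track is the value of $p$ (the number of odd entries exceeding $1$) and therefore which cycles appear in the wedge $\bigl(\vee_{j=1}^{k}C_{e_j}\bigr)\vee\bigl(\vee_{j=1}^{p}C_{o_j-1}\bigr)$. In case~(iii) we have $\mathbf{m}=(e_1,e_2,1)$ with $p=0$, so the wedge reduces to $C_{e_1}\vee C_{e_2}$; in case~(iv) we have $\mathbf{m}=(e_1,o_1,1)$ with $o_1\geq 3$, hence $p=1$ and the wedge is $C_{e_1}\vee C_{o_1-1}$; in case~(v) we have $\mathbf{m}=(e_1,1,1)$ with $p=0$, and the wedge collapses to $C_{e_1}$. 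Substituting in each case and simplifying $\prod o_j = 1$ where applicable yields the displayed formulas.

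I do not anticipate any real obstacle: the entire proof is a case analysis driven by the parity pattern of $(x_1,x_2,x_3)$ and by how many of those entries equal $1$. The only step requiring a brief justification is the symmetry of $F$, and the only mild subtlety is remembering that in case~(v) the odd side contributes the factor $o_1o_2 = 1$, so the ``$\facets(C_{e_1})$'' term appears with coefficient $1$ rather than being absorbed into a larger product.
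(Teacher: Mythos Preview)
Your proposal is correct and mirrors the paper's own treatment: the paper states this result as a corollary obtained by ``specializing to \(t=3\)'' in Propositions~\ref{prop: same parity} and~\ref{prop:facets of gen cb mixed}, without spelling out the case analysis, and your write-up carries out exactly that specialization. The explicit remark on the permutation-invariance of \(F\) is a useful addition, since the ordering conventions in the two propositions differ from the order in which the arguments appear in the corollary.
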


\begin{example}
Figures~\ref{fig:cbexample0},~\ref{fig:cbexample1}, and~\ref{fig:cbexample2} illustrate some of the facet-defining functions for the symmetric edge polytope of \(\CB(4,2,2)\).  The vertices are labeled with their function values, and the edges are labeled ``~\(+\)~" if they are ascending and ``~\(-\)~" if they are descending.
\end{example}

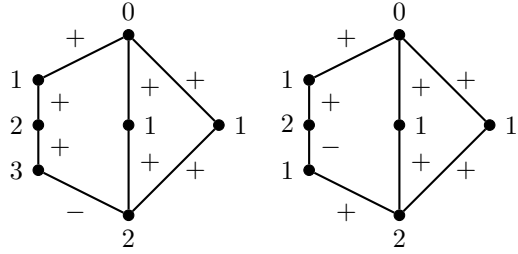
\begin{figure}

\begin{tikzpicture}
\begin{scope}[scale=.6, xshift=0, yshift=0]

	\vertex[fill,label=above:\(0\)](c1) at (-3,3) {};
	\vertex[fill, label=right:\(1\)] (c2) at (-3,1) {};
	\vertex[fill,label=below:\(2\)] (c3) at (-3,-1) {};
	\vertex[fill, label=left:\(1\)](l1) at (-5,2) {};
	\vertex[fill, label=left:\(2\)](l2) at (-5,1) {};
	\vertex[fill, label=left:\(3\)] (l3) at (-5,0) {};
	\vertex[fill, label=right:\(1\)] (r2) at (-1,1) {};
	\draw[thick] (c1)--(c2) node[pos=.6,right=0.2mm]  {\(+\)};
	\draw[thick] (c1)--(r2) node[pos=.5,right=0.2mm]  {\(+\)};
	\draw[thick] (c1)--(l1) node[pos=.6,above=0.6mm]  {\(+\)}; 
	\draw[thick] (c2)--(c3) node[pos=.4,right=0.2mm]  {\(+\)};
	\draw[thick] (r2)--(c3) node[pos=.5,right=0.2mm]  {\(+\)};
	\draw[thick] (l1)--(l2) node[pos=.5,right=0.2mm]  {\(+\)};
	\draw[thick] (l2)--(l3) node[pos=.5,right=0.2mm]  {\(+\)};
	\draw[thick] (l3)--(c3) node[pos=.4,below=0.6mm]  {\(-\)};
	\vertex[fill,label=above:\(0\)](c'1) at (3,3) {};
	\vertex[fill, label=right:\(1\)] (c'2) at (3,1) {};
	\vertex[fill,label=below:\(2\)] (c'3) at (3,-1) {};
	\vertex[fill, label=left:\(1\)](l'1) at (1,2) {};
	\vertex[fill, label=left:\(\phantom{-}2\)](l'2) at (1,1) {};
	\vertex[fill, label=left:\(1\)] (l'3) at (1,0) {};
	\vertex[fill, label=right:\(1\)] (r'2) at (5,1) {};
	\draw[thick] (c'1)--(c'2) node[pos=.6,right=0.2mm]  {\(+\)};
	\draw[thick] (c'1)--(r'2) node[pos=.5,right=0.2mm]  {\(+\)};
	\draw[thick] (c'1)--(l'1) node[pos=.6,above=0.6mm]  {\(+\)}; 
	\draw[thick] (c'2)--(c'3) node[pos=.4,right=0.2mm]  {\(+\)};
	\draw[thick] (r'2)--(c'3) node[pos=.5,right=0.2mm]  {\(+\)};
	\draw[thick] (l'1)--(l'2) node[pos=.5,right=0.2mm]  {\(+\)};
	\draw[thick] (l'2)--(l'3) node[pos=.5,right=0.2mm]  {\(-\)};
	\draw[thick] (l'3)--(c'3) node[pos=.4,below=0.6mm]  {\(+\)};
\end{scope}
\end{tikzpicture}

\caption{Some of the facet-defining functions of \(\CB(4,2,2)\) when \(j=0\)}
\label{fig:cbexample0}
\end{figure}

\begin{figure}
\begin{tikzpicture}
\begin{scope}[scale=.6, xshift=0, yshift=0]

	\vertex[fill,label=above:\(0\)](c1) at (-9,3) {};
	\vertex[fill, label=right:\(1\)] (c2) at (-9,1) {};
	\vertex[fill,label=below:\(0\)] (c3) at (-9,-1) {};
	\vertex[fill, label=left:\(1\)](l1) at (-11,2) {};
	\vertex[fill, label=left:\(2\)](l2) at (-11,1) {};
	\vertex[fill, label=left:\(1\)] (l3) at (-11,0) {};
	\vertex[fill, label=right:\(1\)] (r2) at (-7,1) {};
	\draw[thick] (c1)--(c2) node[pos=.6,right=0.2mm]  {\(+\)};
	\draw[thick] (c1)--(r2) node[pos=.5,right=0.2mm]  {\(+\)};
	\draw[thick] (c1)--(l1) node[pos=.6,above=0.6mm]  {\(+\)}; 
	\draw[thick] (c2)--(c3) node[pos=.4,right=0.2mm]  {\(-\)};
	\draw[thick] (r2)--(c3) node[pos=.5,right=0.2mm]  {\(-\)};
	\draw[thick] (l1)--(l2) node[pos=.5,right=0.2mm]  {\(+\)};
	\draw[thick] (l2)--(l3) node[pos=.5,right=0.2mm]  {\(-\)};
	\draw[thick] (l3)--(c3) node[pos=.4,below=0.6mm]  {\(-\)};
    \vertex[fill,label=above:\(0\)](c1) at (-3,3) {};
	\vertex[fill, label=right:\(-1\)] (c2) at (-3,1) {};
	\vertex[fill,label=below:\(0\)] (c3) at (-3,-1) {};
	\vertex[fill, label=left:\(1\)](l1) at (-5,2) {};
	\vertex[fill, label=left:\(2\)](l2) at (-5,1) {};
	\vertex[fill, label=left:\(1\)] (l3) at (-5,0) {};
	\vertex[fill, label=right:\(1\)] (r2) at (-1,1) {};
	\draw[thick] (c'1)--(c'2) node[pos=.6,right=0.2mm]  {\(+\)};
	\draw[thick] (c'1)--(r'2) node[pos=.5,right=0.2mm]  {\(+\)};
	\draw[thick] (c'1)--(l'1) node[pos=.6,above=0.6mm]  {\(+\)}; 
	\draw[thick] (c'2)--(c'3) node[pos=.4,right=0.2mm]  {\(-\)};
	\draw[thick] (r'2)--(c'3) node[pos=.5,right=0.2mm]  {\(-\)};
	\draw[thick] (l'1)--(l'2) node[pos=.5,right=0.2mm]  {\(-\)};
	\draw[thick] (l'2)--(l'3) node[pos=.5,right=0.2mm]  {\(-\)};
	\draw[thick] (l'3)--(c'3) node[pos=.4,below=0.6mm]  {\(+\)};
	\vertex[fill,label=above:\(0\)](c'1) at (3,3) {};
	\vertex[fill, label=right:\(1\)] (c'2) at (3,1) {};
	\vertex[fill,label=below:\(0\)] (c'3) at (3,-1) {};
	\vertex[fill, label=left:\(1\)](l'1) at (1,2) {};
	\vertex[fill, label=left:\(\phantom{-}0\)](l'2) at (1,1) {};
	\vertex[fill, label=left:\(-1\)] (l'3) at (1,0) {};
	\vertex[fill, label=right:\(1\)] (r'2) at (5,1) {};
	\draw[thick] (c1)--(c2) node[pos=.6,right=0.2mm]  {\(-\)};
	\draw[thick] (c1)--(r2) node[pos=.5,right=0.2mm]  {\(+\)};
	\draw[thick] (c1)--(l1) node[pos=.6,above=0.6mm]  {\(+\)}; 
	\draw[thick] (c2)--(c3) node[pos=.4,right=0.2mm]  {\(+\)};
	\draw[thick] (r2)--(c3) node[pos=.5,right=0.2mm]  {\(-\)};
	\draw[thick] (l1)--(l2) node[pos=.5,right=0.2mm]  {\(+\)};
	\draw[thick] (l2)--(l3) node[pos=.5,right=0.2mm]  {\(-\)};
	\draw[thick] (l3)--(c3) node[pos=.4,below=0.6mm]  {\(-\)};
	\vertex[fill,label=above:\(0\)](c'1) at (9,3) {};
	\vertex[fill, label=right:\(-1\)] (c'2) at (9,1) {};
	\vertex[fill,label=below:\(0\)] (c'3) at (9,-1) {};
	\vertex[fill, label=left:\(1\)](l'1) at (7,2) {};
	\vertex[fill, label=left:\(0\)](l'2) at (7,1) {};
	\vertex[fill, label=left:\(-1\)] (l'3) at (7,0) {};
	\vertex[fill, label=right:\(1\)] (r'2) at (11,1) {};
	\draw[thick] (c'1)--(c'2) node[pos=.6,right=0.2mm]  {\(-\)};
	\draw[thick] (c'1)--(r'2) node[pos=.5,right=0.2mm]  {\(+\)};
	\draw[thick] (c'1)--(l'1) node[pos=.6,above=0.6mm]  {\(+\)}; 
	\draw[thick] (c'2)--(c'3) node[pos=.4,right=0.2mm]  {\(+\)};
	\draw[thick] (r'2)--(c'3) node[pos=.5,right=0.2mm]  {\(-\)};
	\draw[thick] (l'1)--(l'2) node[pos=.5,right=0.2mm]  {\(-\)};
	\draw[thick] (l'2)--(l'3) node[pos=.5,right=0.2mm]  {\(-\)};
	\draw[thick] (l'3)--(c'3) node[pos=.4,below=0.6mm]  {\(+\)};
\end{scope}
\end{tikzpicture}

\caption{Some of the facet-defining functions of \(\CB(4,2,2)\) when \(j=1\)}
\label{fig:cbexample1}
\end{figure}

\begin{figure}
\begin{tikzpicture}
\begin{scope}[scale=.6, xshift=0, yshift=0]

	\vertex[fill,label=above:\(0\)](c1) at (-3,3) {};
	\vertex[fill, label=right:\(-1\)] (c2) at (-3,1) {};
	\vertex[fill,label=below:\(-2\)] (c3) at (-3,-1) {};
	\vertex[fill, label=left:\(-1\)](l1) at (-5,2) {};
	\vertex[fill, label=left:\(-2\)](l2) at (-5,1) {};
	\vertex[fill, label=left:\(-3\)] (l3) at (-5,0) {};
	\vertex[fill, label=right:\(-1\)] (r2) at (-1,1) {};
	\draw[thick] (c1)--(c2) node[pos=.6,right=0.2mm]  {\(-\)};
	\draw[thick] (c1)--(r2) node[pos=.5,right=0.2mm]  {\(-\)};
	\draw[thick] (c1)--(l1) node[pos=.6,above=0.6mm]  {\(-\)}; 
	\draw[thick] (c2)--(c3) node[pos=.4,right=0.2mm]  {\(-\)};
	\draw[thick] (r2)--(c3) node[pos=.5,right=0.2mm]  {\(-\)};
	\draw[thick] (l1)--(l2) node[pos=.5,right=0.2mm]  {\(-\)};
	\draw[thick] (l2)--(l3) node[pos=.5,right=0.2mm]  {\(-\)};
	\draw[thick] (l3)--(c3) node[pos=.4,below=0.6mm]  {\(+\)};
	\vertex[fill,label=above:\(0\)](c'1) at (4,3) {};
	\vertex[fill, label=right:\(-1\)] (c'2) at (4,1) {};
	\vertex[fill,label=below:\(-2\)] (c'3) at (4,-1) {};
	\vertex[fill, label=left:\(-1\)](l'1) at (2,2) {};
	\vertex[fill, label=left:\(-2\)](l'2) at (2,1) {};
	\vertex[fill, label=left:\(-1\)] (l'3) at (2,0) {};
	\vertex[fill, label=right:\(-1\)] (r'2) at (6,1) {};
	\draw[thick] (c'1)--(c'2) node[pos=.6,right=0.2mm]  {\(-\)};
	\draw[thick] (c'1)--(r'2) node[pos=.5,right=0.2mm]  {\(-\)};
	\draw[thick] (c'1)--(l'1) node[pos=.6,above=0.6mm]  {\(-\)}; 
	\draw[thick] (c'2)--(c'3) node[pos=.4,right=0.2mm]  {\(-\)};
	\draw[thick] (r'2)--(c'3) node[pos=.5,right=0.2mm]  {\(-\)};
	\draw[thick] (l'1)--(l'2) node[pos=.5,right=0.2mm]  {\(-\)};
	\draw[thick] (l'2)--(l'3) node[pos=.5,right=0.2mm]  {\(+\)};
	\draw[thick] (l'3)--(c'3) node[pos=.4,below=0.6mm]  {\(-\)};
\end{scope}
\end{tikzpicture}

\caption{Some of the facet-defining functions of \(\CB(4,2,2)\) when \(j=2\)}
\label{fig:cbexample2}
\end{figure}
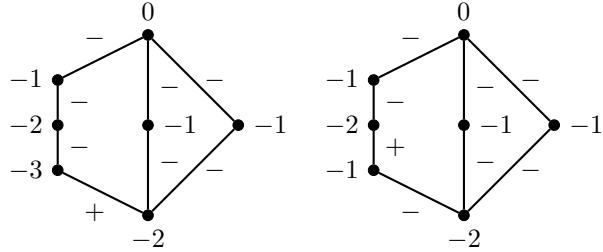

Using these results, we can make partial progress toward Conjecture~\ref{conj:nn+1} in two special cases, given below in Theorem~\ref{thm:general F less than M} and Proposition~\ref{prop:max CB less than M}.

\begin{theorem}{\label{thm:general F less than M}}
For all \(n\geq 4\), if \(x_1\geq x_2\geq x_3\geq 1\), all \(x_i\)'s have the same parity, and \(x_1+x_2+x_3=n+1\), then
\[
F(x_1,x_2,x_3)\leq M(n).
\]
Thus, if \(x_1\), \(x_2\), \(x_3\) are all of the same parity, then Conjecture~\ref{conj:nn+1} is true.
\end{theorem}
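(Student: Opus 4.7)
The plan is to first establish the clean binomial upper bound
\[
F(x_1,x_2,x_3) \;\leq\; \binom{x_1}{\lfloor x_1/2\rfloor}\binom{x_2+x_3}{(x_2+x_3)/2},
\]
and then to recognize the right-hand side as (a scalar multiple of) the facet count of a wedge of two cycles, so that Theorem~\ref{thm: disjoint cycles less than M(n)} can be applied. To derive the bound, I would replace every factor \(\binom{x_1}{\frac{1}{2}(x_1-x_3)+j}\) in the definition of \(F\) by its maximum, the central binomial coefficient \(\binom{x_1}{\lfloor x_1/2\rfloor}\) (which lies in the range of indices swept by \(j\in[0,x_3]\) because all \(x_i\) share a common parity), and then apply Vandermonde's identity to the remaining sum
\[
\sum_{j=0}^{x_3}\binom{x_3}{j}\binom{x_2}{\tfrac{1}{2}(x_2-x_3)+j}=\binom{x_2+x_3}{(x_2+x_3)/2}
\]
after using the symmetry \(\binom{x_2}{a}=\binom{x_2}{x_2-a}\).

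In the all-even case, the right-hand side of the bound equals \(\facets(C_{x_1})\facets(C_{x_2+x_3})=\facets(C_{x_1}\vee C_{x_2+x_3})\) by Lemma~\ref{lem:cyclefacets} and Proposition~\ref{prop:wedgesmultiply}. The wedge \(C_{x_1}\vee C_{x_2+x_3}\) is a connected graph on \(n\) vertices with \(n+1\) edges and two edge-disjoint cycles, so Theorem~\ref{thm: disjoint cycles less than M(n)} gives \(F(x_1,x_2,x_3)\leq M(n)\); the single degenerate instance \((x_1,x_2,x_3)=(2,2,2)\) is checked by direct computation. In the all-odd case with \(x_2+x_3\geq 4\), the identity \(\binom{x_1}{(x_1-1)/2}=\frac{2}{x_1+1}\facets(C_{x_1})\) (a short factorial manipulation for odd \(x_1\)) transforms the bound into \(F(x_1,x_2,x_3)\leq \frac{2}{x_1+1}\facets(C_{x_1}\vee C_{x_2+x_3})\), and applying Theorem~\ref{thm: disjoint cycles less than M(n)} together with \(x_1\geq 3\) (so that \(\frac{2}{x_1+1}\leq \frac{1}{2}\)) finishes this subcase.

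The main obstacle is the remaining odd subcase \(x_2=x_3=1\), where \(C_{x_2+x_3}=C_2\) fails to be a simple graph and Theorem~\ref{thm: disjoint cycles less than M(n)} does not literally apply to \(C_{x_1}\vee C_2\). Here \(F(x_1,1,1)=2\binom{x_1}{(x_1-1)/2}\) and \(n=x_1+1\) is even, so Theorem~\ref{thm:M(n) inequailty} gives \(M(x_1+1)=2M(x_1)\), reducing the problem to the inequality \(\binom{2k-1}{k-1}\leq M(2k-1)\) where \(x_1=2k-1\). I would attack this via \(\binom{2k-1}{k-1}=\tfrac{1}{2}\binom{2k}{k}\) together with the Vandermonde expression \(\binom{2k}{k}=\sum_j\binom{k}{j}^2\) and the resulting central binomial bound \(\binom{2k}{k}\leq (k+1)\binom{k}{\lfloor k/2\rfloor}^2\); comparing this to the two closed forms for \(M(2k-1)\) from the proof of Theorem~\ref{thm:M(n) inequailty} (namely \(k^2\binom{k-1}{(k-1)/2}^2\) for \(k\) odd and \((k+1)(k-1)\binom{k}{k/2}\binom{k-2}{(k-2)/2}\) for \(k\) even) reduces the claim in each parity subcase to an elementary ratio inequality between consecutive central binomial coefficients.
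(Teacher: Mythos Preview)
Your argument is correct and is substantially cleaner than the paper's own proof. The paper bounds \(F(x_1,x_2,x_3)\) crudely by \((x_3+1)\binom{x_1}{\lfloor x_1/2\rfloor}\binom{x_2}{\lfloor x_2/2\rfloor}\binom{x_3}{\lfloor x_3/2\rfloor}\), then compares this with the closed form of \(M(n)\) via Stirling's approximation, obtaining the inequality only for \(k\gtrsim 29\) and disposing of the remaining finitely many cases with SageMath.

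Your route is genuinely different: the Vandermonde step collapses the bound to \(\binom{x_1}{\lfloor x_1/2\rfloor}\binom{x_2+x_3}{(x_2+x_3)/2}\), which is precisely \(\facets(C_{x_1}\vee C_{x_2+x_3})\) in the all-even case and \(\tfrac{2}{x_1+1}\facets(C_{x_1}\vee C_{x_2+x_3})\) in the all-odd case. This reduces the same-parity \(CB\) problem to the already-established disjoint-cycle case (Theorem~\ref{thm: disjoint cycles less than M(n)}), so the Stirling estimates and computer checks are avoided entirely. The residual subcase \(x_2=x_3=1\) (where \(C_2\) is not simple) is dispatched by the elementary reduction \(\binom{2k-1}{k-1}=\tfrac12\binom{2k}{k}\le\tfrac{k+1}{2}\binom{k}{\lfloor k/2\rfloor}^2\) and a one-line ratio comparison in each parity of \(k\), and the lone degenerate even triple \((2,2,2)\) is a direct check. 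In short, your proof is self-contained, uniform over all \(n\ge4\), and makes the dependence on the disjoint-cycle theorem transparent---a real improvement over the paper's analytic approach.
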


\begin{remark}
The proof of Theorem~\ref{thm:general F less than M} makes use of the Stirling bounds on \(n!\) given in~\cite{robbins1955stirling}. 
Namely,
\[
\sqrt{2\pi}\phantom{\cdot}n^{n+\frac{1}{2}}e^{-n}e^\frac{1}{12n+1}\leq n!\leq \sqrt{2\pi}\phantom{\cdot}n^{n+\frac{1}{2}}e^{-n}e^\frac{1}{12n}.
\]
\end{remark}

\begin{proof}[Proof of Theorem~\ref{thm:general F less than M}]
In each of the following cases, we show that the desired inequality holds for large enough \(n\).
For all smaller values of \(n\) we have verified that the theorem holds using SageMath~\cite{sage}. 
Throughout the proof, we use the notation \(\overset{!}{\leq}\) to indicate an unproven inequality we wish to show.

\noindent\emph{Case 1 (\(n=2k-1\)):} In this case, \(x_1+x_2+x_3=2k\), and all \(x_i\)'s are even by assumption.
Then we have:
\[
\begin{split}
    F(x_1,x_2,x_3)&=\sum_{j=0}^{x_3}\binom{x_3}{j}\binom{x_2}{\frac{1}{2}\left(x_2-x_3\right)+j}\binom{x_1}{\frac{1}{2}\left(x_1-x_3\right)+j}\\&\leq (x_3+1)\binom{x_3}{\frac{x_3}{2}}\binom{x_2}{\frac{x_2}{2}}\binom{x_1}{\frac{x_1}{2}}
\end{split}
\]

\emph{Subcase 1(a):} If \(k\) is even, 
\[
M(n)=M(2k-1)=(k+1)(k-1)\binom{k}{\frac{k}{2}}\binom{k-2}{\frac{k-2}{2}}.
\]
In this case, to show \(F(x_1,x_2,x_3)\leq M(n)\), it suffices to show
\begin{equation}\label{ineq: WTS case 1a}
\begin{split}
    (x_3+1)\phantom{\cdot}x_3!x_2!x_1!\left(\frac{k}{2}!\right)^2\left(\frac{k-2}{2}!\right)^2\overset{!}{\leq} (k+1)(k-1)k!(k-2)!\left(\frac{x_3}{2}!\right)^2\left(\frac{x_2}{2}!\right)^2\left(\frac{x_1}{2}!\right)^2
    \end{split}
\end{equation}
By the Stirling bounds on \(n!\), it suffices to show

\[
\begin{pmatrix}
(x_3+1)x_3^{x_3+\frac{1}{2}}x_2^{x_2+\frac{1}{2}}x_1^{x_1+\frac{1}{2}} &
\\&
\\\cdot \left(\frac{k}{2}\right)^{k+1}\left(\frac{k-2}{2}\right)^{k-1} &
\\&
\\ \cdot e^{-(x_1+x_2+x_3+2k)+2} &
\\&
\\ \cdot e^{\frac{1}{12x_3}+\frac{1}{12x_2}+\frac{1}{12x_1}+\frac{1}{3k}+\frac{1}{3(k-2)}} &
\end{pmatrix}
\overset{!}{\leq} 
\begin{pmatrix}
&\sqrt{2\pi}\left(\frac{x_3}{2}\right)^{x_3+1}\left(\frac{x_2}{2}\right)^{x_2+1}\left(\frac{x_1}{2}\right)^{x_1+1}
\\&
\\&\cdot (k+1)(k-1)k^{k+\frac{1}{2}}(k-2)^{k-\frac{3}{2}}
\\&
\\&\cdot e^{-(x_1+x_2+x_3+2k)+2}
\\&
\\&\cdot e^{\frac{1}{12k+1}+\frac{1}{12k-23}+ \frac{2}{6x_3+1}+ \frac{2}{6x_2+1}+ \frac{2}{6x_1+1}}
\end{pmatrix}
\]

or equivalently,
\[
\begin{pmatrix}
    (x_3+1)&
    \\&
    \\\cdot k^{k+1}(k-2)^{k-1}&
    \\&
    \\\cdot e^{\frac{1}{12x_3}+\frac{1}{12x_2}+\frac{1}{12x_1}+\frac{1}{3k}+\frac{1}{3(k-2)}}&
\end{pmatrix}
    \overset{!}{\leq}
\begin{pmatrix}
    &\frac{\sqrt{2\pi}}{8}\sqrt{x_1x_2x_3}
    \\&
    \\&\cdot (k+1)(k-1)k^{k+\frac{1}{2}}(k-2)^{k-\frac{3}{2}}
    \\&
    \\&\cdot e^{\frac{1}{12k+1}+\frac{1}{12k-23}+ \frac{2}{6x_3+1}+ \frac{2}{6x_2+1}+ \frac{2}{6x_1+1}}.
\end{pmatrix}
\]

Since \(\frac{1}{12k+1}+\frac{1}{12k-23}> 0\) for \(k\geq 2\), 
\[
e^{\frac{1}{12k+1}+\frac{1}{12k-23}} > 1.
\]
Also, 
\[
-1\leq \frac{1}{12x}-\frac{2}{6x+1} \leq 0
\]
for all \(x\geq 1\) and so
\[
0\leq e^{\frac{1}{12x_3}-\frac{2}{6x_3+1}+\frac{1}{12x_2}-\frac{2}{6x_2+1}+\frac{1}{12x_1}-\frac{2}{6x_1+1}}\leq 1
\]
for all \(x_1,x_2,x_3\).
Therefore, to show inequality~\eqref{ineq: WTS case 1a}, it suffices to show 
\[
 (x_3+1)k^{k+1}(k-2)^{k-1}e^{\frac{1}{3k}+\frac{1}{3(k-2)}}\overset{!}{\leq}\frac{\sqrt{2\pi}}{8}(k+1)(k-1)k^{k+\frac{1}{2}}(k-2)^{k-\frac{3}{2}}\sqrt{x_1x_2x_3}
\]
or rather,

\[
 (x_3+1)\sqrt{k(k-2)}e^{\frac{1}{3k}+\frac{1}{3(k-2)}}\overset{!}{\leq}\frac{\sqrt{2\pi}}{8}(k+1)(k-1)\sqrt{x_1x_2x_3}
\]

Finally, we note the following:
\begin{itemize}
    \item By assumption, \(\Ds x_3+1\leq\frac{2k}{3}+1\leq k+1\), and so \(\Ds\frac{x_3+1}{k+1}\leq 1\).
    \item \(\Ds\frac{\sqrt{k(k-2)}}{k-1}\leq 1\).
    \item \(\Ds 0< e^{\frac{1}{3k}+\frac{1}{3(k-2)}} < e\) for \(k\geq 3\).
    \item By assumption, \(\Ds x_1\geq \frac{2k}{3}\), and \(x_2,x_3\geq 2\), implying \(\Ds x_1x_2x_3\geq \frac{8k}{3}\). 
\end{itemize}

With this, it suffices to show
\[
e\overset{!}{\leq} \frac{\sqrt{2\pi}}{8}\sqrt{\frac{8k}{3}}
\]
or 
\[
k\overset{!}{\geq} \frac{12e^2}{\pi}\approx 28.224.
\]
This inequality and the desired inequality hold for all even \(k\geq 30\).

\emph{Subcase 1(b):} If \(k\) is odd,
\[
M(n)=M(2k-1)=k^2\binom{k-1}{\frac{k-1}{2}}.
\]

To show \(F(x_1,x_2,x_3)\leq M(n)\), it suffices to show
\begin{equation}\label{ineq: WTS case 1b}
\begin{split}
    (x_3+1)\phantom{\cdot}x_3!x_2!x_1!\left(\frac{k-1}{2}!\right)^4\overset{!}{\leq} (k!)^2\left(\frac{x_3}{2}!\right)^2\left(\frac{x_2}{2}!\right)^2\left(\frac{x_1}{2}!\right)^2.
    \end{split}
\end{equation}

By the Stirling bounds on \(n!\), it suffices to show
\[
\begin{pmatrix}
(x_3+1)x_3^{x_3+\frac{1}{2}}x_2^{x_2+\frac{1}{2}}x_1^{x_1+\frac{1}{2}}&
\\&
\\\cdot\left(\frac{k-1}{2}\right)^{2k}&
\\&
\\\cdot e^{-(x_1+x_2+x_3+2k)+2}&
\\&
\\\cdot e^{\frac{1}{12x_3}+\frac{1}{12x_2}+\frac{1}{12x_1}+\frac{2}{3(k-1)}}
\end{pmatrix}
    \overset{!}{\leq} 
\begin{pmatrix}
&\sqrt{2\pi}\left(\frac{x_3}{2}\right)^{x_3+1}\left(\frac{x_2}{2}\right)^{x_2+1}\left(\frac{x_1}{2}\right)^{x_1+1}
\\&
\\&\cdot k^{2k+1}
\\&
\\&\cdot e^{-(x_1+x_2+x_3+2k)}
\\&
\\&\cdot e^{\frac{2}{12k+1}+ \frac{2}{6x_3+1}+ \frac{2}{6x_2+1}+ \frac{2}{6x_1+1}}.
\end{pmatrix}
\]
Using the same kinds of computations as the previous case, we see it suffices to show
\[
e^2(x_3+1)(k-1)^{2k}e^{\frac{2}{3(k-1)}}\overset{!}{\leq} \frac{\sqrt{2\pi}}{8}k^{2k+1}\sqrt{x_1x_2x_3}
\]

Now note that:
\begin{itemize}
    \item \(\Ds x_3+1\leq \frac{2k}{3}+1\leq k\) for \(k\geq 3\), and so \(\frac{x_3+1}{k}\leq 1\).
    \item \(\Ds e^2\left(\frac{k-1}{k}\right)^{2k}\leq 1\) for \(k\geq 3\).
    \item \(1\leq e^{\frac{2}{3(k-1)}}\leq e\) for \(k\geq 2\).
\end{itemize}
So, it suffices to show
\[
e\overset{!}{\leq}\frac{\sqrt{2\pi}}{8}\sqrt{x_1x_2x_3}, 
\]
which, as before, holds for 
\[
k\geq \frac{12e^2}{\pi}\approx 28.224 
\]
or all odd \(k\geq 29\).

\noindent\emph{Case 2 (\(n=2k\)):} In this case, \(x_1+x_2+x_3=2k+1\), and all \(x_i\)'s are odd by assumption.
Then,
\[
\begin{split}
    F(x_1,x_2,x_3)&=\sum_{j=0}^{x_3}\binom{x_3}{j}\binom{x_2}{\frac{1}{2}\left(x_2-x_3\right)+j}\binom{x_1}{\frac{1}{2}\left(x_1-x_3\right)+j}\\&\leq (x_3+1)\binom{x_3}{\frac{x_3-1}{2}}\binom{x_2}{\frac{x_2-1}{2}}\binom{x_1}{\frac{x_1-1}{2}}
\end{split}
\]

\emph{Subcase 2(a):} If \(k\) is even, 
\[
M(n)=M(2k)=2(k+1)(k-1)\binom{k}{\frac{k}{2}}\binom{k-2}{\frac{k-2}{2}}.
\]
To show \(F(x_1,x_2,x_3)\leq M(n)\), it suffices to show
\begin{equation}\label{ineq: WTS case 2a}
\begin{split}
    &(x_3+1)\phantom{\cdot}x_3!x_2!x_1!\left(\frac{k}{2}!\right)^2\left(\frac{k-2}{2}!\right)^2\\&\overset{!}{\leq} 2(k+1)(k-1)k!(k-2)!\left(\frac{x_3-1}{2}!\right)\left(\frac{x_3+1}{2}!\right)\left(\frac{x_2-1}{2}!\right)\left(\frac{x_2+1}{2}!\right)\left(\frac{x_1-1}{2}!\right)\left(\frac{x_1+1}{2}!\right).
    \end{split}
\end{equation}
Equivalently,
\[
\begin{split}
&(x_3+1)\phantom{\cdot}x_3!x_2!x_1!\left(\frac{k}{2}!\right)^2\left(\frac{k-2}{2}!\right)^2\\&\overset{!}{\leq} 2(k+1)(k-1)k!(k-2)!\left(\frac{x_3+1}{2}!\right)^2\left(\frac{x_2+1}{2}!\right)^2\left(\frac{x_1+1}{2}!\right)^2\left(\frac{8}{(x_3+1)(x_2+1)(x_1+1)}\right),
\end{split}
\]
or
\[
\begin{split}
&(x_3+1)\phantom{\cdot}(x_3+1)!(x_2+1)!(x_1+1)!\left(\frac{k}{2}!\right)^2\left(\frac{k-2}{2}!\right)^2\\&\overset{!}{\leq} 16(k+1)(k-1)k!(k-2)!\left(\frac{x_3+1}{2}!\right)^2\left(\frac{x_2+1}{2}!\right)^2\left(\frac{x_1+1}{2}!\right)^2.
\end{split}
\]

By the Stirling bounds, it suffices to show
\[
\begin{pmatrix}
    (x_3+1)(x_3+1)^{x_3+\frac{3}{2}}(x_2+1)^{x_2+\frac{3}{2}}(x_1+1)^{x_1+\frac{3}{2}}&
    \\&
    \\\cdot \left(\frac{k}{2}\right)^{k+1}\left(\frac{k-2}{2}\right)^{k-1}&
    \\&
    \\\cdot e^{-(x_3+x_2+x_1+2k+1)}&
    \\&
    \\\cdot e^{\frac{1}{12(x_3+1)}+\frac{1}{12(x_2+1)}+\frac{1}{12(x_1+1)}+\frac{1}{3k}+\frac{1}{3(k-2)}}&
\end{pmatrix}    
    \overset{!}{\leq}
\begin{pmatrix}
    & 16\sqrt{2\pi}\left(\frac{x_3+1}{2}\right)^{x_3+2}\left(\frac{x_2+1}{2}\right)^{x_2+2}\left(\frac{x_1+1}{2}\right)^{x_1+2}
    \\&
    \\&\cdot (k+1)(k-1)k^{k+\frac{1}{2}}(k-1)^{k-\frac{3}{2}}
    \\&
    \\&\cdot e^{-(x_3+x_2+x_1+2k+1)}
    \\&
    \\&\cdot e^{\frac{1}{12k+1}+\frac{1}{12(k-2)+1}+\frac{2}{6(x_3+1)+1}+\frac{2}{6(x_2+1)+1}+\frac{2}{6(x_1+1)+1}}
\end{pmatrix}
\]
After computations similar to those in Case 1, we see it suffices to show
\[
(x_3+1)\sqrt{k(k-2)}e\overset{!}{\leq} \frac{\sqrt{2\pi}}{8}(k+1)(k-1)\sqrt{(x_1+1)(x_2+1)(x_3+1)}
\]
where \(x_1+1\geq \frac{2k+4}{3}\), \(x_2+1\geq 2\), \(x_3+1\geq 2\).
It suffices to have 
\[
e\overset{!}{\leq}\frac{\sqrt{2\pi}}{8}\sqrt{4\left(\frac{2k+4}{3}\right)}
\]
or 
\[
k\geq \frac{12e^2}{\pi}-2\approx 26.224.
\]
So the desired inequality holds for even \(k\geq 28\).

\emph{Subcase 2(b):} If \(k\) is odd,

\[
M(n)=M(2k)=2k^2\binom{k-1}{\frac{k-1}{2}}^2
\]

To show \(F(x_1,x_2,x_3)\leq M(n)\) it suffices to show

\begin{equation}
\begin{split}
    &(x_3+1)x_3!x_2!x_1!\left(\frac{k-1}{2}!\right)^4\\&\overset{!}{\leq} 2(k!)^2\left(\frac{x_3-1}{2}!\right)\left(\frac{x_3+1}{2}!\right)\left(\frac{x_2-1}{2}!\right)\left(\frac{x_2+1}{2}!\right)\left(\frac{x_1-1}{2}!\right)\left(\frac{x_1+1}{2}!\right).
    \end{split}
\end{equation}

Equivalently,

\[
\begin{split}
 &(x_3+1)\phantom{\cdot}x_3!x_2!x_1!\left(\frac{k-1}{2}!\right)^4
 \\&\overset{!}{\leq} 2(k!)^2\left(\frac{x_3+1}{2}!\right)^2\left(\frac{x_2+1}{2}!\right)^2\left(\frac{x_1+1}{2}!\right)^2\left(\frac{8}{(x_3+1)(x_2+1)(x_1+1)}\right),   
\end{split}
\]

or

\[
\begin{split}
    &(x_3+1)(x_3+1)!(x_2+1)!(x_1+1)!\left(\frac{k-1}{2}!\right)^4
    \\&\overset{!}{\leq} 16(k!)^2\left(\frac{x_3+1}{2}!\right)^2\left(\frac{x_2+1}{2}!\right)^2\left(\frac{x_1+1}{2}!\right)^2
\end{split}
\]

By the Stirling bounds, it suffices to show
\[
\begin{pmatrix}
    (x_3+1)(x_3+1)^{x_3+\frac{3}{2}}(x_2+1)^{x_2+\frac{3}{2}}(x_1+1)^{x_1+\frac{3}{2}}&
    \\&
    \\\cdot\left(\frac{k-1}{2}\right)^{2k}&
    \\&
    \\\cdot e^{-(x_3+x_2+x_1+2k+1)}
    \\&
    \\\cdot e^{\frac{1}{12(x_3+1)}+\frac{1}{12(x_2+1)}+\frac{1}{12(x_1+1)}+\frac{2}{3(k-1)}}
\end{pmatrix}    
    \overset{!}{\leq} 
\begin{pmatrix}    
    &16\sqrt{2\pi}\phantom{\cdot}\left(\frac{x_3+1}{2}\right)^{x_3+2}\left(\frac{x_2+1}{2}\right)^{x_2+2}\left(\frac{x_1+1}{2}\right)^{x_1+2}
    \\&
    \\&\cdot k^{2k+1}
    \\&
    \\&\cdot e^{-(x_3+x_2+x_1+2k+3)}
    \\&
    \\&\cdot e^{\frac{2}{12k+1}+\frac{2}{6(x_3+1)+1}+\frac{2}{6(x_2+1)+1}+\frac{2}{6(x_1+1)+1}}
\end{pmatrix}
\]

After computations similar to those in previous cases, we see it suffices to show
\[
e^2(x_3+1)(k-1)^{2k}e^{\frac{2}{3(k-1)}}\overset{!}{\leq}\frac{\sqrt{2\pi}}{8}k^{2k+1}\sqrt{(x_1+1)(x_2+1)(x_3+1)} \, .
\]

As in Case 1(b), it suffices to show 
\[
e\overset{!}{\leq}\frac{\sqrt{2\pi}}{8}\sqrt{(x_1+1)(x_2+1)(x_3+1)}
\]
with \(x_1+1\geq \frac{2k+4}{3}\), \(x_2+1\geq 2\), \(x_3+1\geq 2\).
The desired inequality holds for 
\[
k\geq \frac{12e^2}{\pi}-2\approx 26.224
\]
or all odd \(k\geq 27\).
\end{proof}

Our second special case concerns a certain family of \(\CB\) graphs with an even number of vertices where the two cycles share exactly one edge.
We will need the following lemma, the proof of which follows from straightforward computations after expanding the right hand sides. 

\begin{lemma}\label{lem:auxeqs}
\noindent If \(k\) is even,
    \begin{equation}\label{eqn:even M and F equality first}
    M(2k)=\left(\frac{k+2}{2}\right)\left(\frac{k}{2}\right)F(k+1,k-1,1).
    \end{equation}
If \(k\) is odd,
    \begin{equation}\label{eqn:even M and F equality second}
    M(2k)=\left(\frac{k+1}{2}\right)^2F(k,k,1).
    \end{equation}
For even \(k\)
\begin{equation}\label{eqn: M incrementing equality even}
M(2k-2)=\frac{k}{2(k+1)}M(2k-1).
\end{equation} 
For odd \(k\)
\begin{equation}\label{eqn: M incrementing equality odd}
M(2k-2)=\frac{k-1}{2k}M(2k-1).
\end{equation}
For all \(k\)
\begin{equation}\label{eqn: M incrementing equality all}
M(2k-1)=\frac{1}{2}M(2k).
\end{equation}
Finally, if \(k\) is even, 
\begin{equation}\label{eqn: N(cycle) equality}
\facets(C_k)=\frac{4}{k}\facets(C_{k-1}) \, .
\end{equation}
\end{lemma}

\begin{prop}\label{prop:max CB less than M}
Let \(n=2k\geq 10\).
\begin{enumerate}
    \item[(i)] If \(k\) is even,
    \[
    \facets(\CB(k,k,1))\leq M(2k).
    \]
    \item[(ii)] If \(k\) is odd,
    \[
    \facets(\CB(k+1,k-1,1))\leq M(2k).
    \]
\end{enumerate}
\end{prop}

\begin{proof}
For even \(k\), we have
\[
\begin{split}
    \facets(\CB(k,k,1))&=\facets(C_k\vee C_k)+k^2F(k-1,k-1,1)
    \\&\overset{\eqref{eqn: N(cycle) equality}, \eqref{eqn:even M and F equality second}}{=}\frac{16}{k^2}\facets(C_{k-1}\vee C_{k-1})+\frac{4k^2}{k^2}M(2k-2)
    \\&\overset{\mathrm{Def}~\ref{def:M(n)}}{=}\frac{16}{k^2}M(2k-3)+4M(2k-2)
    \\&\overset{\eqref{eqn: M incrementing equality all},\eqref{eqn: M incrementing equality even}}{=}\frac{2}{k(k+1)}M(2k)+\frac{k-1}{k}M(2k)
    \\&=\frac{k^2+1}{k^2+k}M(2k)\leq M(2k).
\end{split}
\]

For odd \(k\),
\[
\begin{split}
    \facets(\CB(k+1,k-1,1))&=\facets(C_{k+1}\vee C_{k-1})+k^2F(k-1,k-1,1)
    \\&\overset{\eqref{eqn: N(cycle) equality},\, \eqref{eqn:even M and F equality first}}{=}\frac{16}{(k+1)(k-1)}\facets(C_{k}\vee C_{k-2})+\frac{4(k+1)(k-1)}{(k+1)(k-1)}M(2k-2)
    \\&\overset{\mathrm{Def}~\ref{def:M(n)}}{=}\frac{16}{(k+1)(k-1)}M(2k-3)+4M(2k-2)
    \\&\overset{\eqref{eqn: M incrementing equality all},\eqref{eqn: M incrementing equality odd}}{=}\frac{2}{k(k+1)}M(2k)+\frac{k}{k+1}M(2k)
    \\&=\frac{k^2+2}{k^2+k}M(2k)\leq M(2k).
\end{split}
\]
\end{proof}

\section{Further Conjectures and Open Problems}\label{sec:conjectures}

Through the course of this study, we observed several patterns that remain as conjectures and open questions.
First, computational evidence suggests interesting structure for the function \(F(x_1,x_2,x_3)\) beyond Theorem~\ref{thm:general F less than M}.
We formally record our observations as the following conjecture, which has been confirmed with SageMath~\cite{sage} for all $n$ less than or equal to $399$.

\begin{conjecture}\label{conj:Fbounds}
For \(n=2k\) and \(k\geq 2\) with \(x_1+x_2+x_3=n+1\), \(F(x_1,x_2,x_3)\) is maximized at \(F(n-1,1,1)\).
For \(n=2k-1\) and \(k\geq 3\) with \(x_1+x_2+x_3=n+1\), \(F(x_1,x_2,x_3)\) is maximized at \(F(n-3,2,2)\).
Further, for any \(x_1\geq x_2\geq x_3\geq 3\) all even or all odd positive integers,
\[
F(x_1,x_2,x_3)\leq F(x_1+2,x_2,x_3-2)
\]
and
\[
F(x_1,x_2,x_3)\leq F(x_1+2,x_2-2,x_3) \, ,
\]
when the subtraction by \(2\) will maintain the inequalities on the \(x_i\)'s.
\end{conjecture}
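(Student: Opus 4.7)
My plan is to recast the conjecture probabilistically and prove both monotonicity inequalities in one stroke, then deduce the maximization claims by iteration. Let $p_x(s) = \binom{x}{(x+s)/2}/2^x$ denote the distribution of the sum $S_x$ of $x$ independent Rademacher $(\pm 1)$ random variables. Reindexing the sum in the proof of Proposition~\ref{prop: same parity} by $s = m_t - 2j$ yields the symmetric identity
\[
F(x_1,x_2,x_3) = 2^{x_1+x_2+x_3}\,\Pr(S_{x_1}=S_{x_2}=S_{x_3}),
\]
so the first monotonicity inequality is equivalent to $\Pr(S_{x_1}=S_{x_2}=S_{x_3})\leq\Pr(S_{x_1+2}=S_{x_2}=S_{x_3-2})$, since $(x_1,x_3)\mapsto(x_1+2,x_3-2)$ preserves $x_1+x_2+x_3$.

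To prove this, couple by writing $S_{x_1+2}=S_{x_1}+W$ and $S_{x_3}=S_{x_3-2}+Z$, where $W,Z$ are independent sums of two Rademachers (supported on $\{-2,0,2\}$ with masses $\tfrac14,\tfrac12,\tfrac14$) and the underlying walks are mutually independent. Expanding both probabilities and pooling the $\pm 2$ contributions via the evenness of $p_x$ gives
\[
\Pr(S_{x_1+2}=S_{x_2}=S_{x_3-2}) - \Pr(S_{x_1}=S_{x_2}=S_{x_3}) = \tfrac{1}{2}B,
\]
where $B = \sum_s p_{x_2}(s)\,h(s)$ and
\[
h(s) = p_{x_1}(s-2)p_{x_3-2}(s) - p_{x_1}(s)p_{x_3-2}(s-2).
\]
The binomial ratio $p_x(s)/p_x(s-2) = (x-s+2)/(x+s)$ yields the key algebraic identity
\[
(x_1+s)(x_3-s) - (x_3-2+s)(x_1+2-s) = 2(x_1-x_3+2)(1-s),
\]
so $h(s)\geq 0$ iff $s\leq 1$ (using $x_1\geq x_3$), and a direct substitution gives the antisymmetry $h(2-s)=-h(s)$. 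Substituting $s\mapsto 2-s$ in $B$ and using $p_{x_2}(2-s) = p_{x_2}(s-2)$ gives
\[
2B = \sum_s\bigl[p_{x_2}(s) - p_{x_2}(s-2)\bigr]\,h(s).
\]
By unimodality of binomial coefficients, $p_{x_2}(s)\geq p_{x_2}(s-2)$ iff $|s|\leq|s-2|$ iff $s\leq 1$, matching the sign pattern of $h(s)$ exactly; each summand is therefore non-negative and $B\geq 0$. The second monotonicity inequality follows by the identical argument with $x_2$ and $x_3$ swapped, using that $F$ is symmetric in its arguments.

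The maximization claims then follow by iteration. This probabilistic argument actually gives the first inequality for all $x_3\geq 2$ (and the second for $x_2\geq 2$), slightly extending the conjecture's hypothesis. From any same-parity triple $(x_1,x_2,x_3)$ with $x_1\geq x_2\geq x_3\geq 1$ summing to $n+1$, the first inequality drives $x_3$ to its minimum value ($1$ odd, $2$ even), after which the second drives $x_2$ similarly, terminating at $(n-1,1,1)$ or $(n-3,2,2)$. The main obstacle I anticipate is checking the sign characterization $h(s)\geq 0\iff s\leq 1$ at boundary values of $s$ where some of the densities $p_{x_1}(s\pm 2)$ or $p_{x_3-2}(s\pm 2)$ vanish: the algebraic derivation above assumed all the relevant densities were positive, and a direct case-by-case check at the endpoints of the supports is needed to confirm the conclusion extends uniformly.
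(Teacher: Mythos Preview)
This statement is Conjecture~\ref{conj:Fbounds} in the paper, and the paper does \emph{not} prove it: the only support offered is SageMath verification for all $n\leq 399$. Your probabilistic reformulation and coupling argument are not in the paper at all; if correct, they resolve an open problem the authors explicitly pose.

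Your argument is correct, and the boundary concern you flag is easily dispatched. By the antisymmetry $h(2-s)=-h(s)$ it suffices to check $h(s)\leq 0$ for $s\geq 2$. If $s>x_3-2$ then $p_{x_3-2}(s)=0$, so $h(s)=-p_{x_1}(s)\,p_{x_3-2}(s-2)\leq 0$ directly, with no ratio computation needed. If instead $2\leq s\leq x_3-2$, then $0\leq s-2<s\leq x_3-2\leq x_1$ forces all four densities $p_{x_1}(s)$, $p_{x_1}(s-2)$, $p_{x_3-2}(s)$, $p_{x_3-2}(s-2)$ to be strictly positive, and your ratio identity applies cleanly to give $h(s)$ the sign of $1-s<0$. (In the odd-parity case $h(1)=0$ by evenness of $p_x$.) Hence every summand in $2B=\sum_s[p_{x_2}(s)-p_{x_2}(s-2)]\,h(s)$ is nonnegative, and this holds under only the hypotheses $x_1\geq x_3\geq 2$. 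By the symmetry of $F$, the second monotonicity inequality follows under $x_1\geq x_2\geq 2$ with no lower bound on $x_3$ at all. These relaxed hypotheses are precisely what your iteration needs: you can drive $x_3$ down to its parity minimum and then $x_2$ likewise, terminating at $(n-1,1,1)$ or $(n-3,2,2)$ as claimed.
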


For example, the first inequality in Conjecture~\ref{conj:Fbounds} asserts that for $x_1\geq x_2 \geq x_3\geq 5$ all of the same parity,
\begin{align*}
& \sum_{j=0}^{x_3}\binom{x_3}{j}\binom{x_2}{\frac{1}{2}\left(x_2-x_3\right)+j}\binom{x_1}{\frac{1}{2}\left(x_1-x_3\right)+j}\\
\leq & \sum_{j=0}^{x_3-2}\binom{x_3-2}{j}\binom{x_2}{\frac{1}{2}\left(x_2-x_3+2\right)+j}\binom{x_1+2}{\frac{1}{2}\left(x_1-x_3\right)+j} \, .
\end{align*}

Second, the remaining case for Conjecture~\ref{conj:nn+1} is the following.

\begin{conjecture}\label{conj:remainingcb}
If \(x_1\), \(x_2\), and \(x_3
\) have different parities, then \(N(\CB(x_1,x_2,x_3))\leq M(n)\).
\end{conjecture}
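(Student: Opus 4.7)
The plan is to apply Corollary~\ref{cor:facets of coffee beans} in the remaining mixed-parity cases (ii), (iii), (iv) and to bound each summand in the resulting expression for $N(\CB(x_1, x_2, x_3))$ using the machinery developed for Theorems~\ref{thm:general F less than M} and~\ref{thm: disjoint cycles less than M(n)}, together with Robbins's Stirling bounds, mirroring the style of the proof of Proposition~\ref{prop:max CB less than M}. The overall strategy is first to prove the bound at conjectured maximizers within each mixed-parity case, and then to establish monotonicity results that reduce every other mixed-parity triple to such a maximizer.

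I would first handle cases (iii) and (iv), in which one path has length~$1$. A short manipulation using the identity $\binom{e-1}{e/2-1} = \binom{e-1}{e/2} = \tfrac{1}{2}\binom{e}{e/2}$ collapses Corollary~\ref{cor:facets of coffee beans}(iii) to $N(\CB(e_1,e_2,1)) = \bigl(\tfrac{e_1 e_2}{2} + 1\bigr) N(C_{e_1} \vee C_{e_2})$, and an analogous computation should produce a similar product expression in case (iv). Combining these formulas with Theorem~\ref{thm: disjoint cycles less than M(n)} and a monotonicity statement in the style of Lemmas~\ref{lem:odd cycle better than even} and~\ref{lem:evenly distributed cycles} (facets are maximized when the two non-unit path lengths are as balanced as possible) would reduce (iii) and (iv) to the extremal instances already bounded by Proposition~\ref{prop:max CB less than M}. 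For the interior cases (ii)(a) and (ii)(b), in which all paths have length $\geq 2$, both summands in the formula for $N(\CB)$ are $F$-values with polynomial coefficients; I would bound each summand via Theorem~\ref{thm:general F less than M} and then derive a parity-dependent inequality using the Robbins bounds in the spirit of the four-case Stirling analysis already carried out in the proof of Theorem~\ref{thm:general F less than M}, leaving finitely many small values of $n$ to exhaustive verification in SageMath.

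The hard part will be case (ii)(b), the graph $\CB(e_1, e_2, o_1)$ with two even paths of length $\geq 2$ and one odd path of length $\geq 3$, whose second summand carries a coefficient $e_1 e_2$ of order $n^2$. The naive bound $F(e_1-1, e_2-1, o_1) \leq M(n-2)$ is too weak to absorb this quadratic factor, so either a sharper bound on $F$ (analogous to Conjecture~\ref{conj:Fbounds}) must first be proved, or the two summands must be bounded jointly by cancelling them against different asymptotic pieces of $M(n)$. My expectation is that establishing the required special-shape monotonicity of $F$ in the sense of Conjecture~\ref{conj:Fbounds} is the essential prerequisite; once that is in place, the remaining Stirling bookkeeping, though tedious, should proceed by direct analogy with the proof of Theorem~\ref{thm:general F less than M}.
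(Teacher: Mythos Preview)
There is no proof in the paper to compare against: Conjecture~\ref{conj:remainingcb} is stated as an \emph{open problem}. The authors explicitly write that ``it is not clear to the authors how to obtain a stronger bound'' beyond the crude estimate $N(\CB(x_1,x_2,x_3))\le 6n^2 M(n)$ coming from Corollary~\ref{cor:facets of coffee beans}(ii). Their partial progress consists of Proposition~\ref{prop:max CB less than M} (which bounds only the conjectured extremal triples for even $n$) and the computational evidence recorded in Conjectures~\ref{conj:Fbounds} and~\ref{conj: Mixed parity CB maximizers}.

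Your proposal is a reasonable programme of attack, and you correctly identify that the crux is the quadratic coefficient in case~(ii)(b). But two points deserve flagging. First, you already concede that the proof would require establishing a monotonicity statement for $F$ ``in the sense of Conjecture~\ref{conj:Fbounds}''; since that conjecture is itself open in the paper, your plan as written reduces one open problem to another rather than closing either. Second, your monotonicity step in case~(iii) is more delicate than you suggest: in the product $(\tfrac{e_1e_2}{2}+1)\,\binom{e_1}{e_1/2}\binom{e_2}{e_2/2}$ with $e_1+e_2$ fixed, the polynomial factor is maximized when the $e_i$ are balanced, but the central-binomial product is \emph{minimized} there (since $\binom{2a}{a}\binom{2b}{b}\sim 4^{a+b}/(\pi\sqrt{ab})$). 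So ``balanced is best'' does not follow from Lemmas~\ref{lem:odd cycle better than even} and~\ref{lem:evenly distributed cycles} by analogy; it needs its own argument showing the quadratic factor dominates. In short, the outline is sensible, but it does not yet go beyond what the paper identifies as the obstruction.
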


Using the recursion given by Corollary~\ref{cor:facets of coffee beans} part (ii) and the inequality \(x_i\leq n\), it is straightforward to deduce that \(N(\CB(x_1,x_2,x_3))\leq 6n^2M(n)\).
It is not clear to the authors how to obtain a stronger bound in this case.
One direction toward proving Conjecture~\ref{conj:remainingcb} is the following.

\begin{conjecture}\label{conj: Mixed parity CB maximizers}
For \(n\geq 10\), \(\facets(\CB(x_1,x_2,x_3))\) with \(x_1+x_2+x_3=n+1\) is maximized by
\[
\begin{cases}
\CB(k-1,k-1,2)\; &n=2k-1, \text{\(k\) even}\\
\CB(k,k-2,2)\; &n=2k-1, \text{\(k\) odd}\\
\CB(k,k,1)\; &n=2k, \text{\(k\) even}\\
\CB(k+1,k-1,1)\; &n=2k, \text{\(k\) odd}\\
\end{cases}.
\]
\end{conjecture}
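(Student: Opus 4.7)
I would prove the conjecture by case analysis on the parity of $n$, separately treating same-parity and mixed-parity configurations of $(x_1, x_2, x_3)$. For same-parity triples, Theorem~\ref{thm:general F less than M} and Conjecture~\ref{conj:Fbounds} together identify the maximum of $F(x_1,x_2,x_3)$; this value must then be compared against the claimed mixed-parity maximum to finish the proof.

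For $n = 2k-1$, every mixed-parity triple summing to $n+1 = 2k$ has the form $(o_1, o_2, e_1)$ with $o_1, o_2$ odd and $e_1$ even, and Corollary~\ref{cor:facets of coffee beans}(ii) gives
\[
\facets(\CB(o_1,o_2,e_1)) = e_1\,F(o_1,o_2,e_1-1) + o_1 o_2\,F(o_1-1,o_2-1,e_1).
\]
The plan is to show the right-hand side, with $o_1+o_2+e_1=2k$ fixed, is maximized when $e_1 = 2$ and $(o_1,o_2) = (k-1,k-1)$ for $k$ even or $(k,k-2)$ for $k$ odd. I would do this in two stages: first, compare $\facets(\CB(o_1,o_2,e_1))$ with $\facets(\CB(o_1+2,o_2,e_1-2))$ (and its symmetric variant), aiming to show that decreasing $e_1$ by $2$ and placing the removed mass on the odd side strictly increases $\facets$ whenever $e_1 \geq 4$; second, once $e_1 = 2$ is forced, verify that among odd $(o_1, o_2)$ with $o_1 + o_2 = 2k-2$ the balanced choice beats all others, via a direct binomial manipulation analogous to Lemma~\ref{lem:evenly distributed cycles}. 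For $n = 2k$ the mixed-parity triple is $(e_1,e_2,o_1)$, the same two-stage scheme applies for $o_1 \geq 3$ via the Corollary~\ref{cor:facets of coffee beans}(ii) recursion, and the base case $o_1 = 1$ uses
\[
\facets(\CB(e_1, e_2, 1)) = e_1 e_2\,F(e_1-1, e_2-1, 1) + \facets(C_{e_1}\vee C_{e_2})
\]
together with the cycle identity $\facets(C_k) = \tfrac{4}{k}\facets(C_{k-1})$ from \eqref{eqn: N(cycle) equality}; here Proposition~\ref{prop:max CB less than M} combined with \eqref{eqn:even M and F equality first} and \eqref{eqn:even M and F equality second} gives the relevant comparison between $o_1 = 1$ and $o_1 \geq 3$.

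The final step is to compare the mixed-parity maximum against the same-parity bound on $F$ from Conjecture~\ref{conj:Fbounds}. The same-parity optimum is $F(2k-4, 2, 2)$ for $n = 2k-1$ and $F(2k-1,1,1)$ for $n = 2k$, while the mixed-parity optima expand via Corollary~\ref{cor:facets of coffee beans}(iii)--(v) into sums of recognizable quantities. The required inequality becomes a product-of-binomials estimate which should follow either by a central-binomial log-concavity argument or by Stirling-type bounds in the style of the proof of Theorem~\ref{thm:general F less than M}.

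The main obstacle is the internal trade-off in the recursion: the first $F$-term is largest when $e_1-1$ dominates the all-odd triple, while the second is largest when the odd pair dominates the all-even triple; meanwhile the scalar $o_1 o_2$ is maximized when the odd pair is balanced rather than skewed. Quantifying how these competing tendencies combine is where the heart of the argument lies. I would attack this by reducing to a single-variable monotonicity in $e_1$ (after pinning the odd pair at the balanced value appropriate for each $e_1$) and establish the required step-by-step inequality by direct manipulation of the binomial sums defining $F$, exploiting log-concavity of vertically aligned binomial coefficients in Pascal's triangle as hinted at by the combinatorial interpretation of $F$ given following Proposition~\ref{prop: same parity}. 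Establishing this log-concavity bound cleanly, uniformly in $e_1$, is where I expect the bulk of the technical work to lie.
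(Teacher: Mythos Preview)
The statement you are addressing is Conjecture~\ref{conj: Mixed parity CB maximizers}, which the paper leaves \emph{open}: there is no proof in the paper to compare against. The authors support it only by exhaustive SageMath computation for all triples with $x_1+x_2+x_3\leq 535$. So what you have written is not an alternative proof but an outline of how one might try to settle an open problem.

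Your outline has two genuine gaps. First, it rests on Conjecture~\ref{conj:Fbounds}, which is itself unproven in the paper (verified only for $n\leq 399$). Any argument that invokes it is conditional. Second, the substantive content of your plan---the ``two-stage'' monotonicity in $e_1$, the balanced-odd-pair optimality once $e_1=2$, and the final comparison of the mixed-parity optimum against the same-parity $F$-maximum---is asserted but not carried out. You correctly identify the difficulty: in the recursion from Corollary~\ref{cor:facets of coffee beans}(ii), the two $F$-terms and the scalar prefactors pull in opposing directions, and you acknowledge that quantifying this trade-off is ``where the heart of the argument lies'' and ``where the bulk of the technical work'' remains. That is exactly the obstruction the authors could not overcome either; the paper notes explicitly that for odd $n$ they lack an equality or bound strong enough to play the role of \eqref{eqn:even M and F equality first}--\eqref{eqn:even M and F equality second}. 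Your appeal to log-concavity of vertically aligned binomial coefficients is a plausible heuristic, but the sums defining $F$ are over products of three binomials with shifted arguments, and no clean log-concavity statement of the required form is established or cited. Until those steps are actually executed, what you have is a reasonable research plan, not a proof.
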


Using SageMath~\cite{sage}, we have computed \(\facets(\CB(x_1,x_2,x_3))\) for all tuples with \(x_1+x_2+x_3=n+1\leq 535\). 
All of these values are less than or equal to the number of facets of our conjectured maximizer for the corresponding \(n\), providing significant support for Conjecture~\ref{conj: Mixed parity CB maximizers}.

Third, when \(n\) is even, Proposition~\ref{prop:max CB less than M} gives that the number of facets given by these conjectured maximizing graphs remains less than \(M(n)\). 
Currently, for odd \(n\) we do not know of an equality or a bound strong enough to accomplish what~\eqref{eqn:even M and F equality first}~and~\eqref{eqn:even M and F equality second} give for even \(n\). 
Therefore, a similar result for odd \(n\) remains unproven.
We have verified that such a result holds for all odd \(n\) less than 100,000 via computations with SageMath~\cite{sage}.

Fourth and finally, throughout our investigations we sought examples of graphs having a high number of symmetric edge polytope facets. Conjecture~\ref{conj:maxmin} asserts that graphs appearing as global facet-maximizers for connected graphs on $n$ vertices can be constructed from minimally intersecting odd cycles, but it is unclear how to prove this.
A related problem would be to prove that the graphs appearing as global facet-maximizers in Conjecture~\ref{conj:maxmin} are facet-maximizers among connected graphs having a fixed number of edges.
We explore this idea a bit further in the special case of the following graphs, which are the conjectured global facet-maximizers for connected graphs on an odd number of vertices.

\begin{definition}
Let \(\windmill(n,r)\) denote the \emph{windmill} graph on \(n\) vertices consisting of \(r\) copies of the cycle \(C_3\) and \(n-1-2r\) edges all wedged at a single vertex.
We say a windmill is \emph{full} if \(n\) is odd and \(r=\frac{n-1}{2}\).
In other words, a full windmill is a wedge of \(\frac{n-1}{2}\) triangles at a single vertex.
Denote by \(\windmill(n)\) the full windmill on \(n\) vertices.
\end{definition}

 \begin{figure}
\begin{center}
    \begin{tikzpicture}
    
    \begin{scope}[scale=1, xshift=0, yshift=0]
	\vertex[fill](v1) at (0,0) {};
	\vertex[fill](v2) at (0,1.5) {};
	\vertex[fill](v3) at (1.25,.75) {};
	\vertex[fill](v4) at (1.25,-.75) {};
	\vertex[fill](v5) at (0,-1.5) {};	
	\vertex[fill](v6) at (-1.25,-.75) {};
	\vertex[fill](v7) at (-1.25,.75) {};	
	\draw[thick] (v1)--(v2);
	\draw[thick] (v1)--(v3);
	\draw[thick] (v1)--(v4);
	\draw[thick] (v1)--(v5);	
	\draw[thick] (v1)--(v6);
	\draw[thick] (v1)--(v7);
	\draw[thick] (v2)--(v3);
	\draw[thick] (v4)--(v5);
    \node[] at (-2.5,1) {\(\windmill(7,2)\)};   
    \end{scope}

    \begin{scope}[scale=1, xshift=200, yshift=0]
	\vertex[fill](v1) at (0,0) {};
	\vertex[fill](v2) at (0,1.5) {};
	\vertex[fill](v3) at (1.25,.8) {};
	\vertex[fill](v4) at (1.25,-.3) {};
	\vertex[fill](v5) at (.55,-1.35) {};	
	\vertex[fill](v6) at (-1.25,-.5) {};
	\vertex[fill](v7) at (-1.25,.8) {};
	\vertex[fill](v8) at (-0.6, -1.35) {};
	\draw[thick] (v1)--(v2);
	\draw[thick] (v1)--(v3);
	\draw[thick] (v1)--(v4);
	\draw[thick] (v1)--(v5);	
	\draw[thick] (v1)--(v6);
	\draw[thick] (v1)--(v7);
	\draw[thick] (v1)--(v8);
 	\draw[thick] (v2)--(v3);
 	\draw[thick] (v4)--(v5);
    \node[] at (-2.5,1) {\(\windmill(8,2)\)}; 
    \end{scope}    
    \end{tikzpicture}    
\end{center}
    \caption{Two windmill graphs which are not full.}
    \label{fig:windmills}
\end{figure}
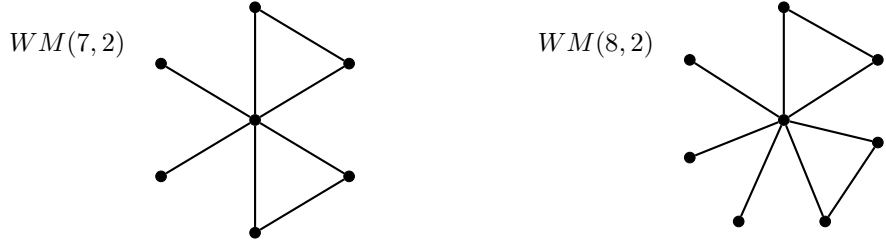

\begin{prop}
For all odd \(n\), 
\[
\facets(\windmill(n))=6^{\frac{n-1}{2}}
\]
\end{prop}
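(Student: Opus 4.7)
The proof is essentially a direct application of two results already established in the excerpt. The plan is to exhibit $\windmill(n)$ as an iterated wedge of triangles, then apply Proposition~\ref{prop:wedgesmultiply} inductively together with the facet count for $C_3$ from Lemma~\ref{lem:cyclefacets}.

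First I would note that since $n$ is odd and the full windmill $\windmill(n)$ consists of $(n-1)/2$ copies of $C_3$ all sharing a single common vertex, we may realize it as
\[
\windmill(n) = \underbrace{C_3 \vee C_3 \vee \cdots \vee C_3}_{(n-1)/2\text{ copies}},
\]
where each wedge is taken at the distinguished central vertex. By Proposition~\ref{prop:wedgesmultiply}, any choice of identification points yields the same facet count, so this decomposition is unambiguous for our purposes.

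Next I would apply Proposition~\ref{prop:wedgesmultiply} repeatedly (or formally, by induction on the number of triangles) to obtain
\[
\facets(\windmill(n)) = \facets(C_3)^{(n-1)/2}.
\]
Finally, Lemma~\ref{lem:cyclefacets} with $m=3$ (odd case) gives
\[
\facets(C_3) = 3\binom{2}{1} = 6,
\]
and substituting yields $\facets(\windmill(n)) = 6^{(n-1)/2}$.

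There is no real obstacle here: the bulk of the work was done in proving Proposition~\ref{prop:wedgesmultiply} and Lemma~\ref{lem:cyclefacets}, and the only mild point to verify is that the wedge structure at a common vertex really is an iterated wedge in the sense of Definition preceding Proposition~\ref{prop:wedgesmultiply}, which follows because $C_3 \vee (C_3 \vee \cdots \vee C_3)$ and $(C_3 \vee C_3) \vee \cdots \vee C_3$ produce graphs with the same facet count regardless of the chosen identification points.
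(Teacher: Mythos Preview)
Your proof is correct and follows essentially the same approach as the paper: recognize $\windmill(n)$ as a wedge of $(n-1)/2$ triangles, apply Proposition~\ref{prop:wedgesmultiply} to get $\facets(\windmill(n))=\facets(C_3)^{(n-1)/2}$, and use Lemma~\ref{lem:cyclefacets} to evaluate $\facets(C_3)=6$. The paper's version is just slightly more terse, omitting the explicit computation of $\facets(C_3)$ and the remark on associativity of the wedge.
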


\begin{proof}
The windmill \(\windmill(n)\) is a join of \(\frac{n-1}{2}\) 3-cycles.  By Lemma~\ref{lem:cyclefacets} and Proposition~\ref{prop:wedgesmultiply},
\[
\facets(\windmill(n))=\left(\facets(C_3)\right)^{\frac{n-1}{2}}=6^{\frac{n-1}{2}}.
\]
\end{proof}

\begin{conjecture}\label{conj:windmill}
Among graphs with \(n\) vertices and \(3(n-1)/2\) edges (for odd \(n\)), \windmill(n) is a facet-maximizer.
\end{conjecture}

To support this conjecture, we used SageMath~\cite{sage} to sample the space of connected graphs with \(n\) vertices and \(3(n-1)/2\) edges using a Markov Chain Monte Carlo technique~\cite[Section~2]{SIAMMCMC}.
Then we computed \(\facets(\PG)\) for each graph \(G\) in our sample.
The transition operation we consider is an edge replacement.
Starting at a graph \(G\), we produce a new graph \(G'\) by randomly choosing an edge \(e\in E(G)\) and a non-edge \(f\in E(G)^C\).
Then, if the edges \((E(G)\setminus\{e\})\cup\{f\}\) form a connected graph, define \(G'\) to be this graph. 
If the new graph is not connected, let \(G'=G\) (in other words, sample at \(G\) again).

Using this single-edge replacement, the resulting graph of graphs \(\mathcal{G}\) is regular~\cite{SIAMMCMC}, with each node having in-degree and out-degree both equal to
\[\frac{3}{2}(n-1)\left(\binom{n}{2}-\frac{3}{2}(n-1)\right)\, .
\]
Given any two graphs \(G_1\) and \(G_2\) in the space, there is a sequence of edge replacements that first transforms a spanning tree of \(G_1\) into a spanning tree of \(G_2\) and then replaces all other edges in \(E(G_1)\setminus E(G_2)\) with edges in \(E(G_2)\setminus E(G_1)\) in any order.  
Thus \(G_2\) is reachable from \(G_1\), and, since all edge replacements are reversible, \(G_1\) is reachable from \(G_2\).
Thus \(\mathcal{G}\) is strongly connected.
Finally, it is straightforward to see that \(\mathcal{G}\) is aperiodic, as it contains 2-cycles and 3-cycles.
Thus, we can conclude that samples from this Markov chain asymptotically obey a uniform distribution, and we can assume that this process uniformly samples the space of connected graphs with \(n\) vertices and \(3(n-1)/2\) edges.
We generated sample families of graphs for all odd \(n\) between 5 and 17.
The results of our sampling, shown in Figures~\ref{fig:allWM}~and~\ref{fig:WM13}, support Conjecture~\ref{conj:windmill} for these values of \(n\).

\begin{figure}
    \centering
    \includegraphics[width=10cm]{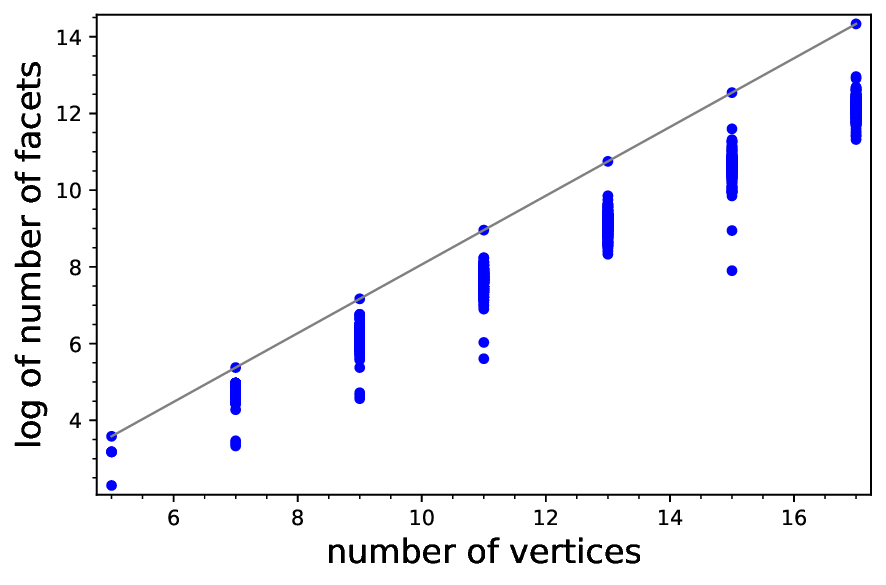}
    \caption{For each odd \(n\) between 5 and 17, the plot shows the log of the number of facets of \(\PG\) for samples of graphs \(G\) with \(n\) vertices and \(3(n-1)/2\) edges with a target sample size of 200 graphs for each \(n\). The line is \( y=\frac{\log(6)}{2}(x-1)\), indicating \(\facets(\windmill(n))\) for each \(n\).}
    \label{fig:allWM}
\end{figure}

\begin{figure}
    \centering
    \includegraphics[width=10cm]{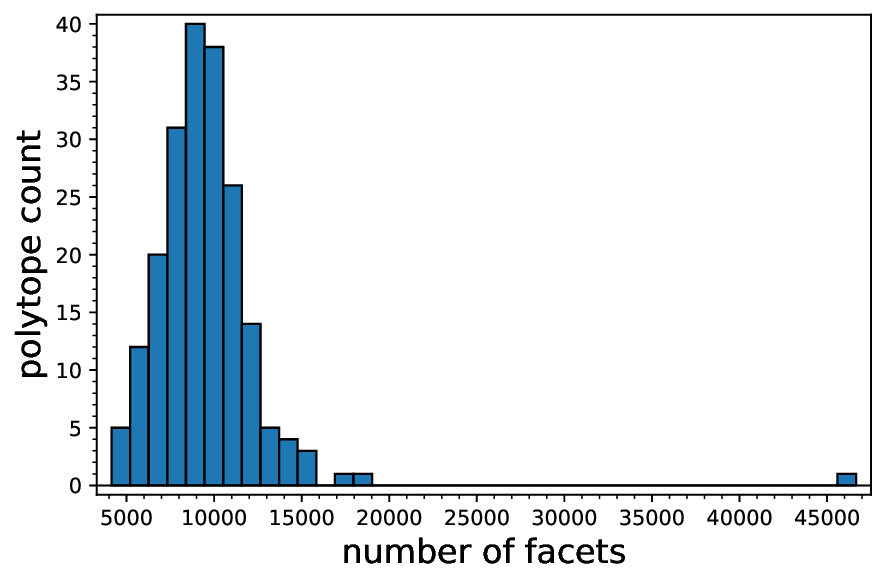}
    \caption{For \(n=13\), the histogram shows the distribution of \(\facets(\PG)\) for our sample graphs. Not only does the maximum number of facets in our sample occur at \(6^6=\facets(\windmill(13))\), but there is a significant gap between our maximizer and all other facet counts in our sample.}
    \label{fig:WM13}
\end{figure}

The complexity of counting facets and determining which graphs are facet-maximizers in a case as small as graphs with \(n\) vertices and \(n+1\) edges was unexpected and indicates that there are many factors at play.
Therefore, counting the facets of symmetric edge polytopes remains an interesting problem in terms of both establishing formulas and investigating new techniques.

\bibliographystyle{plain}
\bibliography{Braun}

\begin{thebibliography}{10}

\bibitem{chen2021facets}
Tianran Chen, Robert Davis, and Evgeniia Korchevskaia.
\newblock Facets and facet subgraphs of symmetric edge polytopes.
\newblock {\em Discrete Appl. Math.}, 328:139--153, 2023.

\bibitem{chen2017counting}
Tianran Chen, Robert Davis, and Dhagash Mehta.
\newblock Counting equilibria of the {K}uramoto model using birationally
  invariant intersection index.
\newblock {\em SIAM J. Appl. Algebra Geom.}, 2(4):489--507, 2018.

\bibitem{chen2020graphadjacency}
Tianran Chen and Evgeniia Korchevskaia.
\newblock Graph edge contraction and subdivisions for adjacency polytopes,
  2020.

\bibitem{dali2022gammavector}
Alessio D'Alì, Martina Juhnke-Kubitzke, Daniel Köhne, and Lorenzo Venturello.
\newblock On the gamma-vector of symmetric edge polytopes, 2022.

\bibitem{dalidelucchimichalek}
Alessio D'Al\`\i, Emanuele Delucchi, and Mateusz Micha\l~ek.
\newblock Many faces of symmetric edge polytopes.
\newblock {\em Electron. J. Combin.}, 29(3):Paper No. 3.24, 42, 2022.

\bibitem{thetagraphs}
R.~J. Faudree and M.~Simonovits.
\newblock On a class of degenerate extremal graph problems.
\newblock {\em Combinatorica}, 3(1):83--93, 1983.

\bibitem{SIAMMCMC}
Bailey~K. Fosdick, Daniel~B. Larremore, Joel Nishimura, and Johan Ugander.
\newblock Configuring random graph models with fixed degree sequences.
\newblock {\em SIAM Review}, 60(2):315--355, 2018.

\bibitem{HenkRichterZiegler}
Martin Henk, J{\"u}rgen Richter-Gebert, and G{\"u}nter~M. Ziegler.
\newblock Basic properties of convex polytopes.
\newblock In {\em Handbook of discrete and computational geometry}, CRC Press
  Ser. Discrete Math. Appl., pages 243--270. CRC, Boca Raton, FL, 1997.

\bibitem{higashitanifanopolytopes}
Akihiro Higashitani.
\newblock Smooth fano polytopes arising from finite directed graphs.
\newblock {\em Kyoto Journal of Mathematics}, 55(3), Sep 2015.

\bibitem{higashitanijochemkomateusz}
Akihiro Higashitani, Katharina Jochemko, and Mateusz Micha\l~ek.
\newblock Arithmetic aspects of symmetric edge polytopes.
\newblock {\em Mathematika}, 65(3):763--784, 2019.

\bibitem{oeisA027383}
OEIS~Foundation Inc.
\newblock {\it The {O}n-{L}ine {E}ncyclopedia of {I}nteger {S}equences}.
\newblock Published electronically at {\tt https://oeis.org/A027383}, Sequence
  A027383.

\bibitem{oeis}
OEIS~Foundation Inc.
\newblock The {O}n-{L}ine {E}ncyclopedia of {I}nteger {S}equences, 2023.
\newblock Published electronically at {\tt https://oeis.org}.

\bibitem{kalman2022ehrhart}
Tamás Kálmán and Lilla Tóthmérész.
\newblock Ehrhart theory of symmetric edge polytopes via ribbon structures,
  2022.

\bibitem{matsuietal2011}
Tetsushi Matsui, Akihiro Higashitani, Yuuki Nagazawa, Hidefumi Ohsugi, and
  Takayuki Hibi.
\newblock Roots of {E}hrhart polynomials arising from graphs.
\newblock {\em Journal of Algebraic Combinatorics}, 34(4):721–749, May 2011.

\bibitem{NillClassification}
Benjamin Nill.
\newblock Classification of pseudo-symmetric simplicial reflexive polytopes.
\newblock In {\em Algebraic and geometric combinatorics}, volume 423 of {\em
  Contemp. Math.}, pages 269--282. Amer. Math. Soc., Providence, RI, 2006.

\bibitem{nill2006}
Benjamin Nill.
\newblock Classification of pseudo-symmetric simplicial reflexive polytopes.
\newblock In {\em Algebraic and geometric combinatorics}, volume 423 of {\em
  Contemp. Math.}, pages 269--282. Amer. Math. Soc., Providence, RI, 2006.

\bibitem{smoothfanoehrhart2012}
Hidefumi Ohsugi and Kazuki Shibata.
\newblock Smooth {F}ano polytopes whose {E}hrhart polynomial has a root with
  large real part.
\newblock {\em Discrete Comput. Geom.}, 47(3):624--628, 2012.

\bibitem{osughitsuchiya2020}
Hidefumi Ohsugi and Akiyoshi Tsuchiya.
\newblock The $h^*$-polynomials of locally anti-blocking lattice polytopes and
  their $\gamma $-positivity.
\newblock {\em Discrete \& Computational Geometry}, 66(2):701–722, Aug 2020.

\bibitem{symmetricedgematchingpolys}
Hidefumi Ohsugi and Akiyoshi Tsuchiya.
\newblock Symmetric edge polytopes and matching generating polynomials.
\newblock {\em Combinatorial Theory}, 1(0), Dec 2021.

\bibitem{robbins1955stirling}
Herbert Robbins.
\newblock A remark on {S}tirling's formula.
\newblock {\em The American Mathematical Monthly}, 62(1):26–29, Jan 1955.

\bibitem{sage}
{The Sage Developers}.
\newblock {\em {S}ageMath, the {S}age {M}athematics {S}oftware {S}ystem
  ({V}ersion 9.2)}, 2020.
\newblock {\tt https://www.sagemath.org}.

\end{thebibliography}

\end{document}